\def\thanks#1{\protected@xdef\@thanks{\@thanks
        \protect\footnotetext{#1}}}
\theoremstyle{definition}
\newtheorem{definition}{Definition}
\theoremstyle{plain}
\newtheorem{theorem}[definition]{Theorem} 
\newtheorem{lemma}[definition]{Lemma}
\newtheorem{Proposition}[definition]{Proposition}
\newtheorem{Corollary}[definition]{Corollary}
\DeclareMathOperator{\F}{\mathbb{F}}
\DeclareMathOperator{\GL}{GL}
\DeclareMathOperator{\GamL}{\Gamma L}
\DeclareMathOperator{\AGL}{AGL}
\DeclareMathOperator{\Sp}{Sp}
\DeclareMathOperator{\Sym}{Sym}
\newcommand{\Sn}{\mathrm{S}}
\newcommand{\mC}{\mathcal{C}}
\newcommand{\mO}{\Xi}
\newcommand{\mG}{\mathcal{G}}
\newcommand{\mB}{\mathcal{B}}
\numberwithin{definition}{section}
\numberwithin{equation}{section}
\title{Irredundant bases for soluble groups\thanks{The authors are grateful to the anonymous referees for their careful reading and comments which have improved the paper. Sofia Brenner thanks the University of St Andrews for the hospitality during a visit in July 2024, where part of this work was undertaken. She received funding from the European Research
Council (ERC) under the European Union’s Horizon 2020 research and innovation
programme (EngageS: grant agreement No.~820148) and from the German Research
Foundation DFG (SFB-TRR 195 ``Symbolic Tools in Mathematics and their Application'' as well as grant 522790373).
Coen del~Valle is supported by the Natural Sciences and Engineering Research Council of Canada (NSERC), [reference number PGSD-577816-2023], as well as a University of St Andrews School of Mathematics and Statistics Scholarship.}}
\author{Sofia Brenner, Coen del Valle, and Colva M. Roney-Dougal}
\date{\today}
\begin{document}
\maketitle
\begin{abstract}
    Let $\Delta$ be a finite set and $G$ be a subgroup of $\Sym(\Delta)$. An irredundant base for $G$ is a sequence of points of $\Delta$ yielding a strictly descending chain of pointwise stabilisers, terminating with the trivial group. Suppose that $G$ is primitive and soluble. We determine asymptotically tight bounds for the maximum length of an irredundant base for $G$. Moreover, we disprove a conjecture of Seress on the maximum length of an irredundant base constructed by the natural greedy algorithm, and prove Cameron's Greedy Conjecture for $|G|$ odd.

    \bigskip

    \noindent MSC: 20B15 (Primary),  20D10, 20E15 (Secondary)
    
\end{abstract}
\section{Introduction}

We analyse subgroups $G$ of $\Sym(\Delta)$, with $\Delta$ finite. 
A sequence $(\delta_1,\delta_2, \dots, \delta_k)\in\Delta^k$ is \emph{irredundant} for~$G$ if
\[
G > G_{\delta_1} > G_{(\delta_1, \delta_2)} > \dots > G_{(\delta_1, \dots, \delta_k)},
\]
and is a \emph{base} if $G_{(\delta_1, \dots, \delta_k)} = 1$.
Define $b(G)$ and $I(G)$ to be the minimum and maximum $k$, respectively, such that there exists an irredundant base for $G$ of length $k$.

Of great importance in computational group theory, the statistic $b(G)$ has been studied extensively over the past 40 years, especially for $G$  primitive --- see~\cite{Maroti23} for a review of recent work.
In particular, in 1996 Seress~\cite{Seress96} showed that every primitive soluble permutation group has a base of size at most~4, a result which was recently extended by Burness~\cite{Burness} to $G$ primitive with soluble point stabilisers, showing that $b(G)\leq 5$.
The statistic $I(G)$ is much less understood. 
It is a straightforward calculation to show that $I(G)\leq b(G)\log|\Delta|$ (all logarithms will be base 2), but recent work~\cite{Gill_Loda_Spiga_2022,Kelsey_Roney-Dougal} has shown that in fact $I(G)$ is bounded above by a constant multiple of $\log|\Delta|$ whenever $G$ is a small base primitive group. Stronger bounds have been determined for some almost simple primitive groups~\cite{Gill_Liebeck, Roney-Dougal_Wu}. 

In this paper, we first study  $I(G)$ for $G$ primitive and soluble. 
 Here  $G$ is a subgroup of $\AGL_d(p)$ for some prime $p$, hence $\Delta$ can be identified with the points of $\F_p^d$ so that the point stabiliser $H:= G_0$ is an irreducible subgroup of $\GL_d(p)$. 
We shall in fact consider the slightly more general setting of $H$ an irreducible soluble subgroup of $\GL_d(q)$ 
for an arbitrary prime power~$q$, with $\Delta = \F_{q}^d$.  
The irreducible group $H$ is \emph{imprimitive} if $H$ preserves a direct sum decomposition $\F_q^d = V_1 \oplus \dots \oplus V_k$ of non-zero subspaces for some $k \geq 2$, and otherwise $H$ is \emph{primitive}.

For any subgroup of $\GL_d(q)$, every irredundant base is a linearly independent subset of $\F_q^d$, so the bound $I(H) \leq d$ is immediate. Our first main theorem strengthens this observation.  Throughout this paper, $\Omega(n)$ denotes the number of prime divisors of $n$, counted with multiplicities.


\begin{theorem}\label{theo:main}
Let $H$ be an irreducible soluble subgroup of $\GL_d(q)$, acting on the vectors of $\F_q^d$. Then $I(H) \leq d$, and  for every $d$ and every $q > 2$ there exist such groups $H$ with $I(H) = d$.

Furthermore, if $H$ is primitive as a linear group then $I(H) \leq  6.49 \log d + 1$. 
For every $d$ and every $q$ there exists such a group $H$ with $I(H) \geq \Omega(d)+1$, and every primitive $H$ that is not semilinear and has an absolutely irreducible Fitting subgroup satisfies $I(H) \ge \Omega(d)$. 
\end{theorem}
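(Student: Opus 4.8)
The bound $I(H)\le d$ is exactly the linear-independence observation recorded just before the theorem, and I would dispose of it together with its sharpness first. Briefly: if $(v_1,\dots,v_k)$ is irredundant then no $v_{i+1}$ lies in $\langle v_1,\dots,v_i\rangle$, since by linearity any element fixing $v_1,\dots,v_i$ fixes that span pointwise, so $v_{i+1}\in\langle v_1,\dots,v_i\rangle$ would force $H_{(v_1,\dots,v_i)}=H_{(v_1,\dots,v_{i+1})}$; hence the $v_i$ are linearly independent and $k\le d$. For sharpness when $q>2$ I would take the monomial group $H=D\rtimes P$ with $D=(\F_q^\times)^d$ the full diagonal torus and $P\le\Sym(d)$ any transitive soluble group, e.g.\ a regular $C_d$. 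This $H$ is soluble, and irreducible because $P$ is transitive while $D$ forces every invariant subspace to be coordinate-spanned. The standard basis $(e_1,\dots,e_d)$ is then an irredundant base: fixing all $e_i$ forces the identity, and for each $i$ the diagonal matrix with a single entry $\lambda\ne1$ in position $i+1$ (which exists precisely because $q>2$) lies in $H_{(e_1,\dots,e_i)}\setminus H_{(e_1,\dots,e_{i+1})}$. Thus $I(H)=d$.

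For the logarithmic bound in the primitive case I would invoke the structure theory of primitive soluble linear groups. Passing to the field over which $H$ is absolutely irreducible, the Fitting subgroup $F=F(H)$ is of symplectic type with $Z:=Z(F)$ the scalars, $F/Z\cong\bigoplus_i\F_{r_i}^{2s_i}$ a nondegenerate symplectic space, $d=\prod_i r_i^{s_i}$, and $H/F$ a soluble subgroup of $\prod_i\Sp_{2s_i}(r_i)$. The engine of the bound is that the scalars, although of order up to $q-1$, cost only a single base point: for any nonzero $v$ we have $Z\cap H_v=1$, so $H_{v_1}$ embeds into $\bar H:=H/Z$. Since an irredundant base is a strictly descending subgroup chain and $H_{v_1}$ is soluble, the portion after $v_1$ has length at most $\ell(H_{v_1})=\Omega(|H_{v_1}|)\le\Omega(|\bar H|)$, whence $I(H)\le 1+\Omega(|\bar H|)=1+\Omega(|F/Z|)+\Omega(|H/F|)=1+2\Omega(d)+\Omega(|H/F|)$. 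I would then bound $\Omega(|H/F|)$ using sharp order estimates for soluble subgroups of symplectic groups, and handle semilinear groups by descending to the fixed subfield, where the extra Galois part is cyclic of order dividing $d$ and contributes at most $\log d$. The main obstacle here is purely quantitative: the crude Wolf-type estimate $|H/F|\le d^6$ only yields a constant near $8$, so obtaining $6.49$ demands a careful prime-by-prime optimisation of the symplectic order bounds, with the extremal configurations occurring at the smallest primes.

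For the lower bounds I would use two reductions. First, any irredundant sequence extends to an irredundant base by repeatedly appending a point moved by the current stabiliser, so it suffices to construct a long irredundant sequence. Second, since $F\trianglelefteq H$, a strict drop $F_{(v_1,\dots,v_i)}\ne F_{(v_1,\dots,v_{i+1})}$ forces $H_{(v_1,\dots,v_i)}\ne H_{(v_1,\dots,v_{i+1})}$, because these stabilisers meet $F$ in exactly the corresponding $F$-stabilisers; hence it is enough to make the $F$-stabilisers descend. In the clock--shift model of the symplectic-type group on $V=\bigotimes_iV_i$ I would take the vacuum $w_0=\bigotimes_ie_0^{(i)}$, whose $F$-stabiliser is a Lagrangian section of order $\prod_ir_i^{s_i}=d$, and then append $\Omega(d)=\sum_is_i$ vectors cutting this isotropic image down one prime at a time. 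This produces an irredundant sequence of length $\Omega(d)+1$; taking $H$ to be the full normaliser of such an $F$ (with a semilinear factor to realise the dimension exactly over $\F_q$ where necessary) proves the stated construction. For the universal bound $I(H)\ge\Omega(d)$ I would run the same flag argument for an arbitrary such $H$, the possible loss of the final unit reflecting configurations—chiefly at the prime $2$—where the initial Lagrangian drop cannot be isolated as its own base point.
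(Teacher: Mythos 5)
Your treatment of the first claim ($I(H)\le d$, with sharpness via the monomial group $D\rtimes C_d$, which is exactly the paper's $\GL_1(q)\wr C_d$) and of the last claim (the tensor eigenvector flag on the Fitting subgroup, with the loss of one point occurring precisely for the type $2^{1+2m}_-$) matches the paper, as does the overall shape of your $6.49\log d+1$ bound. But there is a structural gap in that bound: you apply the symplectic-type description to $F(H)$ itself, asserting $Z(F)$ scalar, $d=\prod_i r_i^{s_i}$ and $H/F\le\prod_i\Sp_{2s_i}(r_i)$. For an arbitrary primitive soluble $H$ this is false --- it is essentially the hypothesis of the theorem's final part (absolutely irreducible Fitting subgroup), and it can fail even for non-semilinear groups, since the extraspecial part of $F(H)$ may act homogeneously with multiplicity greater than one, leaving an unaccounted-for tensor factor; your semilinear reduction does not touch this case. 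The missing idea is the paper's first move: choose a \emph{maximal} primitive soluble $M$ with $H\le M\le\GL_d(q)$, use the monotonicity $I(H)\le I(M)$ of Lemma~\ref{lem:boundnormalsubgroup}, and invoke the classification of maximal primitive soluble groups (Lemma~\ref{lemma:prelimdecomposition}): $M$ has a unique maximal abelian normal subgroup $A\cong\F_{q^a}^\ast$, the centraliser $C=C_M(A)$ embeds in $\GL_e(q^a)$ with $ea=d$ and \emph{absolutely irreducible} Fitting subgroup, and $M/C$ is Galois of length at most $\Omega(a)$. Once this is in place your arithmetic closes, and your worry about the constant is unfounded: no prime-by-prime optimisation is needed, since the P\'alfy--Wolf bound $|R|<q^{2.244d}$ for completely reducible soluble $R\le\GL_d(q)$ gives directly $I(C)\le 1+2\log e+4.488\log e$ and then $I(M)\le I(C)+\log a\le 1+6.49\log d$ because $\log e+\log a=\log d$.

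The more serious gap is in the existence statement $I(H)\ge\Omega(d)+1$, which is quantified over \emph{every} $q$. Your clock--shift construction requires, for each prime $r$ dividing $d$, an absolutely irreducible extraspecial-type $r$-group in $\GL_{r^s}(q)$, which exists only when $r\mid q-1$ (and $4\mid q-1$ in the symplectic-type case). For $q=2$, or whenever some prime divisor of $d$ fails to divide $q-1$, the group $F$ you describe does not exist over $\F_q$, and your parenthetical ``semilinear factor to realise the dimension'' is exactly where the entire proof would have to live; in the extreme case the group degenerates to a purely semilinear one, for which the Lagrangian flag argument says nothing. The paper's construction avoids extraspecial groups altogether: take $H\cong\GamL_1(q^d)\le\GL_d(q)$, write $d=f_1\cdots f_t$ as a product of primes with multiplicity, set $v_0=1$ and choose $v_i\in\F_{q^{f_1\cdots f_i}}\setminus\F_{q^{f_1\cdots f_{i-1}}}$; then $H_{(v_0,\dots,v_i)}=\langle\phi^{f_1\cdots f_i}\rangle$ with $\phi$ the Frobenius $\xi\mapsto\xi^q$, so the stabilisers descend one prime at a time through the subfield lattice, giving an irredundant base of length $\Omega(d)+1$ uniformly in $q$. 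Some Galois-chain argument of this kind is indispensable for this part, and it is absent from your proposal.
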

If $H$ is imprimitive, 
then the corresponding group $G$ of affine type is permutation isomorphic to  a subgroup of a wreath product group with product action, and is said to be \emph{non-basic}. Conversely if $H$ is primitive then $G$ preserves no such product action structure, and is \emph{basic}. We deduce the following by noting that $d \le \log |\Delta|$, and $d > 0.63 \log |\Delta|$ if $q > 2$.

\begin{Corollary}
Let $G$ be a primitive soluble permutation group of degree $n$. 
Then
$I(G) \le \log n + 1$, and there exist infinitely many such groups $G$ for which $I(G) \ge 0.63 \log n + 1$. Furthermore, if $G$ is basic then $I(G) \le  6.49 \log \log n + 2$, and there exist infinitely many such groups $G$ for which  $I(G) \geq \log \log n +2$. 
\end{Corollary}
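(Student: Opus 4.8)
The plan is to reduce everything to Theorem~\ref{theo:main} via the standard dictionary between primitive soluble permutation groups and irreducible soluble linear groups. Since $G$ is primitive and soluble it is of affine type: writing $n = p^d$ for a prime $p$, we may identify $\Delta$ with $V = \F_p^d$ so that $G = V \rtimes H$ with $H = G_0$ an irreducible soluble subgroup of $\GL_d(p)$, and $G$ is basic precisely when $H$ is primitive as a linear group. The first and crucial step is to establish the exact relation $I(G) = I(H) + 1$. For the inequality $I(G) \ge I(H) + 1$, I would prepend the origin to any irredundant base: if $(v_1, \dots, v_k)$ is irredundant for $H$ then, since $G_0 = H$ and $H$ acts linearly, $(0, v_1, \dots, v_k)$ is irredundant for $G$. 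For the reverse inequality, given an irredundant base $(\delta_1, \dots, \delta_m)$ for $G$, the tail $(\delta_2, \dots, \delta_m)$ is irredundant for the point stabiliser $G_{\delta_1}$, which is permutation isomorphic to $H$ because $G$ is transitive; since $0$ is fixed by $H$ it never occurs in an irredundant sequence, and for nonzero vectors the permutation and linear stabilisers coincide, so $I(G_{\delta_1}) = I(H)$ and hence $m - 1 \le I(H)$.

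With this relation in hand the upper bounds are immediate arithmetic. Since $p \ge 2$ we have $\log n = d \log p \ge d$, so by the bound $I(H) \le d$ of Theorem~\ref{theo:main} we get $I(G) = I(H) + 1 \le d + 1 \le \log n + 1$. If in addition $G$ is basic, then $H$ is primitive as a linear group, so $I(H) \le 6.49 \log d + 1$; combining this with $d \le \log n$, whence $\log d \le \log \log n$, yields $I(G) \le 6.49 \log \log n + 2$.

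For the lower bounds I would feed the existence families of Theorem~\ref{theo:main} through the same dictionary, choosing the field to optimise the constant. For the non-basic bound, take $q = 3$: the theorem supplies, for every $d$, an irreducible soluble $H \le \GL_d(3)$ with $I(H) = d$, and the associated affine group $G$ of degree $n = 3^d$ is primitive and soluble with $I(G) = d + 1$. Since $d = \log n / \log 3$ and $1/\log 3 > 0.63$, this produces infinitely many $G$ with $I(G) > 0.63 \log n + 1$. For the basic bound, take $q = 2$ and $d = 2^k$: here $\Omega(d) = k = \log d$, and the theorem supplies a primitive $H \le \GL_d(2)$ with $I(H) \ge \Omega(d) + 1 = \log d + 1$; the affine group $G$ is then basic, soluble and primitive of degree $n = 2^d$, with $\log \log n = \log d$, so $I(G) \ge \log d + 2 = \log \log n + 2$.

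The genuinely delicate point is the reduction $I(G) = I(H) + 1$ together with the verification that the constructed families really are primitive permutation groups of the advertised basic or non-basic type: one must confirm that $\F_p$-irreducibility, equivalently primitivity of the affine group, holds for the linear groups produced by Theorem~\ref{theo:main}. This is automatic here because we only ever instantiate the existence statements with $q$ prime, so that $\F_q = \F_p$ and no field-extension issues intervene, and because primitivity versus imprimitivity of $H$ as a linear group transfers directly to basic versus non-basic for $G$. The remaining content is the bookkeeping of the constant $1/\log 3 > 0.63$ and of $\log d \le \log \log n$, which is routine.
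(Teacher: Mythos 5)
Your proposal is correct and follows essentially the same route as the paper, which deduces the corollary from Theorem~\ref{theo:main} via the affine dictionary, noting only that $d \le \log n$ and $d > 0.63\log n$ when $q>2$. The details you supply --- the relation $I(G) = I(H)+1$, the instantiation with the prime fields $q=3$ (imprimitive family) and $q=2$ with $d=2^k$ (semilinear family, where $\Omega(d)=\log d$) --- are exactly what the paper leaves implicit.
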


We go on to study the \emph{greedy base size} $\mathcal{G}(G)$. This is the maximum size of a base for $G$ produced by Blaha's greedy algorithm~\cite{Blaha92}, as follows. We start with an empty sequence  
$\mathcal{B}_0$.  At step $i \ge 1$, we compute the pointwise stabiliser $G^{i-1}$ of the current sequence $\mathcal{B}_{i-1}$, and let the new sequence $\mathcal{B}_i$ be $\mathcal{B}_{i-1}$ extended by any point from a longest orbit of $G^{i-1}$. We stop when $G^i$ becomes trivial, and call $\mathcal{B}_i$  a \emph{greedy base}.
In~\cite{Blaha92}, Blaha shows that there is some absolute constant $d$ such that $$\mathcal{G}(G)\leq db(G)\log \log |\Delta|,$$
and moreover, for all $k\geq 2$ 
 he constructs a group $G$ of degree $n$ satisfying $b(G)=k$ and $\mathcal{G}(G)\geq\frac{1}{5}k\log\log n$.
%
 These groups constructed by Blaha are intransitive, and in 1999 Peter Cameron made his Greedy Conjecture~\cite{CAM99}: there is an absolute constant $c$ such that for all primitive $G$, $$\mathcal{G}(G)\leq cb(G).$$
Very recently, Cameron's Greedy Conjecture has been studied for
almost simple primitive groups with socle either sporadic~\cite{delValle24} or alternating~\cite{delValle_Roney-Dougal}. 
Our next main result proves Cameron's Greedy Conjecture for groups of odd order, in its strongest possible form. 

\begin{theorem}\label{oddgreedy}
Let $G$ be primitive and of odd order. Then $b(G)=\mathcal{G}(G)$.
\end{theorem}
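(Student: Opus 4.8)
The plan is to reduce the statement to a regular orbit problem for linear groups. Since $|G|$ is odd, $G$ is soluble by the Feit--Thompson theorem, and a soluble primitive group is of affine type, so we may write $G = V \rtimes H$ with $V = \F_p^d$ the translation subgroup ($p$ odd), acting on $\Delta = V$, and $H = G_0 \le \GL_d(p)$ an irreducible soluble group of odd order. First I would record two reduction formulae. Taking $0$ as first base point gives $G_0 = H$, and since $G$ is transitive on $V$ every base point lies in a single $G$-orbit; using that the stabiliser of any vector is $G$-conjugate to $H$ and that base size is a conjugacy invariant, an easy argument shows
\[
b(G) = 1 + b(H) \qquad \text{and} \qquad \mathcal{G}(G) = 1 + \mathcal{G}(H),
\]
where $b(H)$ and $\mathcal{G}(H)$ refer to the linear action of $H$ on $V$. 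The formula for $\mathcal{G}$ uses that at the first greedy step $V$ is the unique orbit, so every greedy run for $G$ continues as a greedy run for $H$. It therefore suffices to prove $b(H) = \mathcal{G}(H)$.

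Next I would observe that a greedy base is in particular a base, so $\mathcal{G}(K) \ge b(K)$ for every group $K$; the content of the theorem is thus the reverse inequality $\mathcal{G}(H) \le b(H)$. I claim it is enough to prove that $H$ has a \emph{regular orbit} on $V$, that is, $b(H) = 1$. Indeed, a regular orbit has length $|H|$, which is the largest possible length of any orbit, so if one exists then the first greedy step necessarily selects a vector with trivial stabiliser and every greedy run terminates after a single point. Hence $b(H) = \mathcal{G}(H) = 1$, giving $b(G) = \mathcal{G}(G) = 2$.

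The core of the argument is therefore to show that every irreducible soluble linear group $H$ of odd order has a regular orbit on $V$, and I would split according to whether $H$ is primitive or imprimitive as a linear group. Because $V$ is irreducible, any normal $p$-subgroup of $H$ is trivial, so $F(H) = O_{p'}(H)$; in the primitive case $F(H)$ has the well-known symplectic-type structure (a central product of a cyclic group with $r$-groups of symplectic type for the primes $r \mid |F(H)|$), with $C_H(F(H)) \le F(H)$ and $H/F(H)$ embedding in a product of symplectic groups $\Sp_{2m_i}(r_i)$. When the action is coprime, i.e.\ $p \nmid |H|$, the required regular orbit is supplied by the classical regular orbit theorem for soluble groups of odd order, applied on the homogeneous components and reassembled. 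For the imprimitive case, where $H$ preserves $V = V_1 \oplus \dots \oplus V_k$ and embeds in a wreath product, I would argue inductively, combining regular orbits of the block group on $V_1$ with the odd-order permutation action on the $k$ blocks.

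The hard part will be the non-coprime case $p \mid |H|$, which genuinely occurs: for instance an extraspecial group $7^{1+2}$ extended by an element of order $3$ inside $\Sp_2(7)$ acts irreducibly over a suitable extension of $\F_3$, so that $p = 3$ divides $|H|$. Here the coprime regular orbit theorem does not apply directly, and I expect this to require the most care: one must either establish a regular orbit theorem for odd-order groups acting completely reducibly (not merely coprimely), or exploit the precise symplectic-type structure of $F(H)$ together with the fact that $Z(F(H))$ acts fixed-point-freely by scalars to force a vector with trivial stabiliser. The reducible examples, such as $C_3 \times C_3$ acting by unipotent matrices in defining characteristic, have \emph{no} regular orbit, which shows that complete reducibility --- guaranteed here by irreducibility --- is essential, and that oddness and irreducibility must be used in tandem.
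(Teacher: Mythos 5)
Your reductions $b(G)=1+b(H)$, $\mathcal{G}(G)=1+\mathcal{G}(H)$, and the observation that a regular orbit, being a longest orbit, is necessarily found by the greedy algorithm, are all correct and consistent with the paper. But the claim you reduce everything to --- that \emph{every} irreducible soluble linear group $H$ of odd order has a regular orbit on $V$ --- is false, and the failure is not confined to the non-coprime case you flag as the hard part. Take $H = \langle \omega^2\rangle \rtimes \langle \phi \rangle \le \GamL_1(7^3)$ acting on $V=\F_7^3$, where $\omega$ generates $\F_{343}^\ast$ and $\phi$ is the Frobenius map $x \mapsto x^7$. Then $|H| = 171\cdot 3 = 513$ is odd and coprime to $7$, and $H$ is irreducible; yet every non-zero vector $\omega^k$ is fixed by the non-trivial element $x \mapsto \omega^{-6k}x^7$ (since $-6k+7k = k$), so $H$ has no regular orbit and $b(H)=2$. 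This is exactly why Seress's bound for odd-order primitive groups is $b(G)\le 3$ rather than $2$, and why that bound is sharp; if your plan worked it would prove $b(G)=\mathcal{G}(G)=2$ for all such $G$, which is impossible. Relatedly, the ``classical regular orbit theorem'' for odd-order coprime actions (Espuelas) produces a regular orbit on $V\oplus V$, i.e.\ a base of size $2$, not a regular orbit on $V$ itself.

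The genuine content of the theorem is therefore the case $b(G)=3$, i.e.\ $b(H)=2$, where one must show that the greedy algorithm cannot be trapped: for $v$ in a \emph{largest} $H$-orbit, the stabiliser $H_v$ must itself have a regular orbit on $V$. This is what the paper proves, and your proposal contains no mechanism for it. The paper's route: Seress's proof gives an embedding $H \le L \wr T \le \GL(V)$ with $L \le \GamL_1(p^l)$ and $T$ transitive of odd order on the $k=d/l$ summands; then a dedicated lemma shows $H_v \le L_{u_1} \times \cdots \times L_{u_k}$ for suitable non-zero $u_i$ in the summands. The lemma's two ingredients both use oddness in an essential way: since $|L|$ is odd, no $L$-orbit $\mO \ne \{0\}$ satisfies $\mO = -\mO$, which allows one to split the non-zero vectors into parts $P_1 = -P_2$, and Gluck's theorem supplies a partition $Q_1 \sqcup Q_2$ of the blocks with $T_{Q_1}=1$; combining these, a vector $u$ is built with $H_u \le H \cap (L_{u_1}\times\cdots\times L_{u_k})$ and $|H_v| \le |H_u|$, forcing $H_v$ inside the product as well. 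Finally each $\GamL_1(p^l)_x$ ($x \ne 0$) has a regular orbit, hence so does $H_v$, giving $\mathcal{G}(H)=2$. Without an argument of this kind --- one that analyses the stabiliser of a largest-orbit vector rather than seeking a regular orbit for $H$ itself --- the proposal cannot be repaired.
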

In 1996, Seress proved  that $b(G)\leq 3$ for $G$ primitive of odd order \cite[Theorem 1.3]{Seress96}, and conjectured that 
the greedy base size of every primitive soluble group is bounded above by 4.
Our final main result disproves this conjecture.
\begin{theorem}\label{counterexample}
There is a soluble primitive group $G$ with $\mathcal{G}(G)>4$.
\end{theorem}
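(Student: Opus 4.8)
The plan is to exhibit an explicit soluble primitive group $G$, of affine type, and to show that Blaha's greedy algorithm can produce a base of length at least $5$. Since $G$ is affine, it is determined by an irreducible soluble subgroup $H \le \GL_d(q)$ acting on $\F_q^d$, with the first (translation) layer of the greedy base accounting for one point and the subsequent points corresponding to an irredundant sequence of vectors for $H$. By Theorem~\ref{theo:main} we know that long irredundant bases for $H$ force either $H$ imprimitive with $d$ large, or $H$ primitive with $d$ at least exponential in the base length; a counterexample with $\mathcal{G}(G) > 4$ only needs a greedy base for $H$ of length $\ge 4$, so I would look first among imprimitive examples of modest dimension, where vectors spread across the summands $V_1 \oplus \dots \oplus V_k$ give a natural source of a long greedy chain.

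First I would search for a candidate $H$ in small dimension (say $d \in \{4,6,8\}$) over a small field, using the structural description of soluble irreducible linear groups as subgroups of wreath products $(\text{primitive part}) \wr \Sym(k)$, and I would arrange that the \emph{longest-orbit} rule of the greedy algorithm is forced to select points that successively cut down the stabiliser by one summand at a time. The crucial point is that $\mathcal{G}(G)$ is a \emph{worst-case} quantity over the algorithm's tie-breaking choices among longest orbits, so it suffices to demonstrate one sequence of legal greedy choices of length $5$; I would compute, for the chosen $H$, the orbit-length profiles of $H$, then of each successive stabiliser, verifying at every step that the vector I wish to add lies in a longest orbit. This reduces the whole theorem to a finite orbit-counting verification.

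The key steps, in order, are: (i) fix an explicit $H \le \GL_d(q)$ together with its generators; (ii) identify the orbit of maximal length for $H$ on $\F_q^d \setminus \{0\}$ and pick $v_1$ there, checking $H > H_{v_1}$; (iii) iterate, at each stage computing the orbits of the current pointwise stabiliser and selecting a maximal-orbit vector that is not yet fixed, until after the affine/translation point and four vector-points the stabiliser is trivial; and (iv) confirm that the underlying permutation group $G$ is genuinely primitive (equivalently, that $H$ is irreducible) and soluble. Each of these is a concrete, finite check, ideally carried out and cross-checked in a computer algebra system such as \textsc{Magma} or \textsc{GAP}.

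The main obstacle is \emph{not} producing a long irredundant base — Theorem~\ref{theo:main} already guarantees these exist — but rather ensuring that the greedy \emph{longest-orbit} constraint is met at every step, since the algorithm is not free to choose any irredundant extension but only one lying in a current longest orbit. Engineering $H$ so that the long irredundant chain coincides with a sequence of longest-orbit choices is delicate: adding a vector in one summand typically shrinks some orbits dramatically while leaving others large, so I must balance the summand structure and the primitive part of $H$ so that the "greedy-optimal" vectors are exactly the ones lengthening the chain. I expect this to require either a clever closed-form family or a modest computer search over candidate groups; once a witnessing $H$ is found, the remaining verification is routine.
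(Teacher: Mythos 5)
What you have written is a research plan, not a proof: Theorem~\ref{counterexample} is an existence statement, and you never produce the witness. You fix no group $H$, compute no orbits, and defer every substantive step (the choice of $H$, the orbit-profile computations, the greedy-legality checks) to a future computer search, so there is nothing that can be verified. Moreover, the search space you propose --- imprimitive $H$ of dimension $d \in \{4,6,8\}$ over a small field --- may well contain no counterexample at all: the example constructed in the paper is $H = \GamL_1(4) \wr (\Sn_4 \wr \Sn_3)$ acting on $\F_4^{12} \cong \F_2^{24}$, i.e.\ dimension $24$ over $\F_2$, and nothing in your plan guarantees that a witness exists in the range you specify.

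The more important gap is conceptual. You correctly identify the hard part of your strategy --- enforcing the longest-orbit constraint at every one of the five steps --- but you offer no idea for overcoming it, and the paper's proof shows that this difficulty should be avoided rather than solved. The key observation is that the greedy base size dominates the base size: once a first greedy point $u$ (any vector in a largest $H$-orbit) has been chosen, \emph{every} greedy continuation is in particular a base for $H_u$, so has length at least $b(H_u)$; hence $\mathcal{G}(H) \ge 1 + b(H_u)$, and only the first step requires a longest-orbit analysis. In the paper this single step is done by hand: comparing stabiliser orders chunk-by-chunk shows that some vector $u$ in a largest $H$-orbit has a chunk with at least two zero coordinates, so that $H_u$ contains a subgroup acting as $K = \GamL_1(4) \wr \Sn_2$ on a subspace $U = \F_4^2$ while fixing the complementary coordinates pointwise; a short $2$-transitivity argument shows that no two vectors of $U$ have trivial joint stabiliser in $K$, so $b(H_u) \ge b(K) \ge 3$, whence $\mathcal{G}(G) = \mathcal{G}(H) + 1 \ge 1 + (1+3) = 5$. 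Without either this reduction (or some substitute for it) or an explicitly verified example, your proposal does not establish the theorem.
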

 In Section~\ref{sec:primitive} we  prove Theorem~\ref{theo:main}, then in Section~\ref{Greedy} we prove Theorems~\ref{oddgreedy} and~\ref{counterexample}.

\section{Irredundant bases of soluble linear groups}\label{sec:primitive}
In this section we prove Theorem~\ref{theo:main}. Throughout, we let $H$ be a soluble subgroup of $\GL_d(q)$ and~$\Delta$ be the set of 
vectors of $\F_{q}^{d}$, and we consider $I(H)$ for the action of $H$ on $\Delta$.  
In Subsection~\ref{subsec:c6} we 
bound $I(H)$ when
$H$ is primitive and normalises an absolutely irreducible group of extraspecial type. 
In Subsection~\ref{subsec:tensorinduced}, we consider tensor products of these groups,
then we prove Theorem~\ref{theo:main} in Subsection~\ref{sec:proofmain1}. For a comprehensive reference on the structure of maximal soluble subgroups of $\GL_d(q)$, see \cite{KorhonenBook}.

First, we establish two basic facts about irredundant bases.
The maximum length~$l$ of a strictly descending chain $G = G_0 > G_1> \dots > G_l = 1$ of subgroups of a finite group $G$ will be denoted $\ell(G)$. Clearly, $\ell(G) \leq \Omega(|G|)\leq \log |G|$. 
The first claim of the following lemma is clear, for the second see \cite[Lemma~2.8]{Gill_Loda_Spiga_2022}.

\begin{lemma}\label{lem:boundnormalsubgroup}
Let $G$ be a finite permutation group, let $S$ be a subgroup of $G$ and let $N$ be a normal subgroup of $G$. Then
$I(S) \leq I(G)$, and $I(G) \leq I(N) + \ell(G/N)$. 
\end{lemma}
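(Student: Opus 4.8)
The plan is to treat the two inequalities separately, relying throughout on the elementary fact that any irredundant sequence for a subgroup $K \le \Sym(\Delta)$ extends to an irredundant base: if the pointwise stabiliser of the current sequence is nontrivial then, since $K$ acts faithfully on $\Delta$, some point is moved by it, and appending such a point strictly shrinks the stabiliser; finiteness of $\Delta$ forces termination at the trivial group.

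For $I(S) \le I(G)$, I would take an irredundant base $(\delta_1, \dots, \delta_k)$ for $S$ with $k = I(S)$ and read the same sequence as a sequence for $G$. Writing $G_i = G_{(\delta_1, \dots, \delta_i)}$ and noting $S_{(\delta_1, \dots, \delta_i)} = S \cap G_i$, any equality $G_{i-1} = G_i$ would force $S \cap G_{i-1} = S \cap G_i$, contradicting irredundancy for $S$. Hence $G = G_0 > G_1 > \dots > G_k$ is strict, so the sequence is irredundant for $G$; extending it to a base gives $I(G) \ge k$.

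For $I(G) \le I(N) + \ell(G/N)$, I would take an irredundant base $(\delta_1, \dots, \delta_k)$ for $G$ of length $k = I(G)$, yielding a strict chain $G = G_0 > \dots > G_k = 1$, and track its two projections, the intersections $N \cap G_i$ and the images $G_i N / N$. The crux is a Dedekind-type observation: if $G_{i-1} > G_i$ while both $N \cap G_{i-1} = N \cap G_i$ and $G_{i-1}N = G_i N$, then each $g \in G_{i-1}$ factors as $g = g'n$ with $g' \in G_i$ and $n \in N$, whence $n = g'^{-1}g \in N \cap G_{i-1} = N \cap G_i \le G_i$ and so $g \in G_i$, a contradiction. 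Thus every index lies in $A := \{i : N \cap G_{i-1} > N \cap G_i\}$ or in $B := \{i : G_{i-1}N > G_i N\}$, giving $k \le |A| + |B|$. Here $|B| \le \ell(G/N)$ is immediate, as the distinct images $G_iN/N$ form a strict chain in $G/N$.

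To bound $|A| \le I(N)$, write $A = \{i_1 < \dots < i_m\}$ and pass to the subsequence $(\delta_{i_1}, \dots, \delta_{i_m})$. The step I would be most careful about is checking that this subsequence is irredundant for $N$: choosing $n \in N$ fixing $\delta_1, \dots, \delta_{i_j-1}$ but moving $\delta_{i_j}$ (which exists because $N \cap G_{i_j-1} > N \cap G_{i_j}$), the inequality $i_{j-1} < i_j$ guarantees that $n$ fixes all earlier subsequence points $\delta_{i_1}, \dots, \delta_{i_{j-1}}$ yet moves $\delta_{i_j}$, so it witnesses a strict drop at the $j$-th step of the subsequence. Extending this irredundant sequence to a base for $N$ yields $m \le I(N)$, and combining gives $I(G) = k \le |A| + |B| \le I(N) + \ell(G/N)$. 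I expect the whole difficulty to reside in this second inequality, specifically in making the two-way dichotomy airtight and in verifying that passing to the subsequence indexed by $A$ preserves irredundancy, which hinges precisely on the monotonicity of the index set.
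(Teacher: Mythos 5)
Your proof is correct. Note, though, that the paper does not actually prove this lemma: it declares the first inequality clear and cites \cite[Lemma~2.8]{Gill_Loda_Spiga_2022} for the second, so your write-up supplies an argument the paper delegates to the literature. Both halves of your proof are sound. For $I(S)\le I(G)$, intersecting the $G$-stabiliser chain with $S$ to transfer strictness, and then invoking the extension principle (any irredundant sequence extends to an irredundant base --- a fact the paper itself uses in the proof of its Proposition on tensor products), is exactly the natural argument behind the word ``clear''. For the second inequality, your Dedekind-style dichotomy --- every strict drop $G_{i-1}>G_i$ must be visible either in $N\cap G_{i-1}>N\cap G_i$ or in $G_{i-1}N>G_iN$, since otherwise $g=g'n$ with $g'\in G_i$, $n\in N\cap G_{i-1}=N\cap G_i$ forces $g\in G_i$ --- together with the check that the subsequence indexed by the $N$-drops is irredundant for $N$ (which, as you rightly emphasise, uses that an element fixing $\delta_1,\dots,\delta_{i_j-1}$ in particular fixes all earlier subsequence points), is the standard proof of such inheritance statements and is in the spirit of the cited result. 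The trade-off is the usual one: the paper's citation keeps the exposition short, while your version makes the paper self-contained and makes explicit exactly where normality of $N$ is used (namely, that $G_iN$ is a subgroup), which is worth knowing since the inequality genuinely fails for arbitrary subgroups in place of $N$.
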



\subsection{Normalisers of groups of extraspecial type}\label{subsec:c6}

In this subsection, we suppose that $H$ normalises an absolutely irreducible group of extraspecial type and bound $I(H)$: this will be a base case later on.
We first recall some facts about extraspecial groups. 

Let $r$ be prime. An $r$-group $E$ is \emph{extraspecial} if $\Phi(E) = Z(E) = E' \cong C_r$, where $\Phi(E)$ denotes the Frattini subgroup of $E$. 
For each $m \ge 1$ there exist two non-isomorphic extraspecial groups of order $r^{1+2m}$, denoted by $r_+^{1+2m}$ and $r_-^{1+2m}$. If $r$ is odd then the former has exponent $r$ and the latter exponent $r^2$, whilst
if $r = 2$ then
both 
have exponent four, but  $2^{1+2m}_+ \cong D_8 \circ \dots \circ D_8$ (with $m$ factors $D_8$) and $2^{1+2m}_- \cong Q_8 \circ D_8 \circ \dots \circ D_8$ (with $m-1$ factors $D_8$). It is well known that $C_4 \circ 2^{1+2m}_+ \cong C_4 \circ 2^{1+2m}_-$ (for instance, see \cite[Proposition~2.4.9]{SHO92}), and a group~$E$ of this form is called a \emph{2-group of symplectic type} and denoted $2^{2+2m}$. 
We shall refer to the extraspecial $r$-groups and the 2-groups of symplectic type as \emph{$r$-groups of extraspecial type}. 

Moving on now to representations of these groups, we say that a group $L \leq \GL_{d}(q)$ is in \emph{Class $\mathcal{C}_6$} 
if~$L$ contains an absolutely irreducible normal subgroup $E$ of extraspecial type, of exponent~$r$ when~$r$ is odd. Such an $L$ exists if and only if $d = r^m$ and $r \mid q-1$, and if $E$ is of symplectic type then $4 \mid q-1$. Let $Z$ denote the centre of $\GL_{r^m}(q)$ 
and $K = EZ$. 
 Then 
$\GL_{r^m}(q)$ contains a unique class of absolutely irreducible groups isomorphic to $E$ (see \cite[Theorem~2.4.7]{SHO92}) and $LK/K$ is a subgroup of 
\[
N_{\GL_{r^m}(q)}(K) /K \cong  \begin{cases}
  \Sp_{2m}(r) & \text{if } r > 2 \mbox{ or }  E\cong 2^{2+2m}\\
\mathrm{O}_{2m}^\epsilon(2) & \text{if } r = 2 \text{ and } E \cong 2_\epsilon^{1+2m}, \text{ where } \epsilon \in \{+, -\} 
\end{cases}
\]
(see \cite[Theorem~2.4.12]{SHO92}).  For $\epsilon \in \{+, -\}$ the groups $\mathrm{O}_{2m}^{\epsilon}(2)$  are subgroups of $\Sp_{2m}(2)$, so  $2_{\epsilon}^{1+2m}.\mathrm{O}_{2m}^\epsilon(2)$ is a subgroup of $2^{2+2m}.\Sp_{2m}(2)$. If  $L$ is primitive then $LK/K$ 
is a completely reducible subgroup of $\Sp_{2m}(r)$. 

We now derive an upper bound on $I(H)$, using the following bound due to P\'alfy \cite{Palfy82} and Wolf \cite{Wolf82}
\begin{lemma}\label{lemma:sizeboundsp}
Let $R \le \GL_d(q)$ be completely reducible and soluble. Then $|R| \leq 24^{-1/3} q^{2.244 d} < q^{2.244d}$. 
\end{lemma}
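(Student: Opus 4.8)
This is the classical Pálfy–Wolf bound, so in principle one simply quotes \cite{Palfy82,Wolf82}; nonetheless I sketch how its proof runs, since the calibration of the two constants is what matters here. Write $C = 24^{-1/3}$ and $\lambda = 2.244$, so the target is $|R| \le C\, q^{\lambda d}$, and I would argue by induction on $d$. The decisive structural feature is that $C < 1$: this is exactly what lets the inductive estimates combine without loss.

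First I would reduce to the irreducible case. If $V = \F_q^d$ splits as an $R$-invariant sum $V = V_1 \oplus V_2$ with $\dim V_i = d_i$, then $R$ embeds in the product $R_1 \times R_2$ of its two restrictions, each $R_i$ completely reducible and soluble of degree $d_i$. The inductive hypothesis then gives $|R| \le |R_1|\,|R_2| \le C^2 q^{\lambda(d_1 + d_2)} = C^2 q^{\lambda d} \le C q^{\lambda d}$, where the last inequality uses $C \le 1$.

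For irreducible $R$, Clifford theory yields a dichotomy: either $R$ is primitive as a linear group, or it preserves a decomposition $V = W_1 \oplus \dots \oplus W_t$ into $t \ge 2$ equidimensional blocks of dimension $d/t$ that are transitively permuted. In the imprimitive case $R$ embeds in a wreath product $R_0 \wr S$, where $R_0 \le \GL_{d/t}(q)$ is the completely reducible soluble group induced on a block by its stabiliser and $S \le \Sym(t)$ is the soluble transitive image. Combining the inductive bound $|R_0| \le C q^{\lambda d/t}$ with the companion estimate $|S| \le 24^{(t-1)/3} = C^{-(t-1)}$ for soluble permutation groups gives $|R| \le |R_0|^t |S| \le C^t q^{\lambda d}\, C^{-(t-1)} = C q^{\lambda d}$; the value of $C$ is chosen precisely so that these factors telescope.

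The heart is the primitive case, and this is where I expect the genuine difficulty. Here the structure theory of soluble primitive linear groups says that, modulo scalars, the Fitting subgroup of $R$ is a central product of $r$-groups of extraspecial type (the groups considered in Subsection~\ref{subsec:c6}), that $d = \prod_i r_i^{m_i}$ is the associated product of prime powers, and that the quotient of $R$ by this normal subgroup embeds into a product of symplectic groups $\prod_i \Sp_{2m_i}(r_i)$, itself completely reducible and soluble on a space of dimension $\sum_i 2m_i$ over the residue fields, which is strictly smaller than $d$ outside a handful of small base cases. One then bounds $|R|$ by the order of the extraspecial-type part times the order of this symplectic quotient and applies the induction to the latter. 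The obstacle is purely quantitative: one must verify that feeding the symplectic quotient back into the induction preserves the sharp exponent $\lambda$ and the constant $C$ rather than eroding them, which reduces to a finite numerical check over the small primes and ranks (notably $r \in \{2,3\}$ and small $m$) where the estimate is tightest. The single extremal configuration is $R = \GL_2(3)$ acting on $\F_3^2$, for which $|R| = 48 = 24^{-1/3}\,3^{2 \cdot 2.244}$, pinning down $C$ and $\lambda$ simultaneously.
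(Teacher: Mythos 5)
The paper offers no proof of this lemma at all---it is quoted directly from P\'alfy and Wolf---so your opening move of simply citing \cite{Palfy82,Wolf82} is exactly the paper's approach. Your accompanying sketch of the classical induction (reducing to the irreducible case using $C\le 1$, telescoping the imprimitive wreath-product case against the $24^{(t-1)/3}$ bound for soluble transitive permutation groups, and handling the primitive case via the extraspecial/symplectic structure theory, with $\GL_2(3)$ acting on $\F_3^2$ as the extremal configuration) is a faithful outline of how the cited proof runs.
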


Our next result applies in particular to all primitive soluble Class $\mathcal{C}_6$ groups. 

\begin{Proposition}\label{prop:boundextraspecialcase}
Suppose that $H \leq \GL_{r^m}(q)$ is soluble, in Class $\mathcal{C}_6$, and acts completely reducibly on the normal subgroup $E$ of extraspecial type. Then 
\[|H| \leq |H \cap Z|r^{6.49 m}  \leq (q-1) r^{6.49 m} \quad  \mbox{ and } \quad I(H) \leq 1 + 6.49 \log r^m = 1 + 6.49 \log d.\]  
\end{Proposition}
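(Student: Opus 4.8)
The plan is to prove the order bound first and then read off the bound on $I(H)$ from Lemma~\ref{lem:boundnormalsubgroup}. For the order bound I would filter $H$ through the subgroup chain $H \geq H \cap K \geq H \cap Z$, where $K = EZ$ and $Z$ is the group of scalars, and estimate the order of each factor separately. For the bound on $I(H)$ I would then take $N = H \cap Z$ as the normal subgroup in Lemma~\ref{lem:boundnormalsubgroup}.

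First I would establish $|H| \leq |H \cap Z|\, r^{6.49m}$. Conjugation on $E/Z(E) \cong \F_r^{2m}$, equipped with its symplectic commutator form, identifies $HK/K$ with a subgroup of $\Sp_{2m}(r) \leq \GL_{2m}(r)$, and this subgroup is soluble (as $H$ is) and completely reducible (by the hypothesis on the action of $H$ on $E$). Hence Lemma~\ref{lemma:sizeboundsp}, applied with field $\F_r$ and dimension $2m$, gives
\[
|H/(H \cap K)| = |HK/K| < r^{2.244 \cdot 2m} = r^{4.488m}.
\]
The middle factor $(H \cap K)/(H \cap Z)$ embeds into $K/Z \cong E/Z(E)$, which has order $r^{2m}$, so $|(H \cap K)/(H \cap Z)| \leq r^{2m}$. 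Since $|H|/|H\cap Z| = |H/(H\cap K)|\cdot|(H\cap K)/(H\cap Z)|$, multiplying the two estimates yields $|H|/|H \cap Z| < r^{6.488m} \leq r^{6.49m}$, which is the first inequality; the second follows because $H \cap Z \leq Z \cong C_{q-1}$.

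For the statement on $I(H)$, take $N = H \cap Z$. As every non-identity scalar fixes only the zero vector, the stabiliser in $N$ of any nonzero vector is trivial, so $I(N) \leq 1$. Lemma~\ref{lem:boundnormalsubgroup} then gives
\[
I(H) \leq I(N) + \ell(H/N) \leq 1 + \log |H/N| \leq 1 + 6.49m \log r = 1 + 6.49 \log d,
\]
using $\ell(H/N) \leq \log|H/N|$, the order bound of the previous paragraph, and $d = r^m$.

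The computation is mostly bookkeeping once the chain is set up; the one point that needs care is verifying that the hypothesis that $H$ acts completely reducibly on $E$ transfers to complete reducibility of $HK/K$ as a linear group on $\F_r^{2m}$, so that Lemma~\ref{lemma:sizeboundsp} may legitimately be invoked with the doubled dimension $2m$. The constant $6.49$ is exactly $2 \cdot 2.244 + 2$, so the estimate is as tight as the inputs allow.
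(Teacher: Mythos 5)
Your proof is correct and takes essentially the same route as the paper: your chain $H \geq H \cap K \geq H \cap Z$ coincides with the paper's, since there the intermediate subgroup is $L = (H \cap Z)E$, which equals $H \cap K$, and both arguments combine Lemma~\ref{lem:boundnormalsubgroup} with the P\'alfy--Wolf bound (Lemma~\ref{lemma:sizeboundsp}) applied to $HK/K$ as a completely reducible soluble subgroup of $\Sp_{2m}(r)$. The only difference is organisational: the paper bounds $I(H)$ directly, using the exact chain length $\ell(E/(E\cap Z)) = 2m$ for the elementary abelian middle factor and deducing the order bound at the end, whereas you prove the order bound first and then pass to $I(H)$ via $\ell(H/(H\cap Z)) \leq \log|H/(H\cap Z)|$, which yields the same constant.
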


\begin{proof}
Let $L = (H \cap Z) E$.
Then by Lemma~\ref{lem:boundnormalsubgroup}, 
\[
I(L) \leq I(H \cap Z) + \ell\left(L/(H \cap Z)\right) =  1 + \ell(E/(E \cap Z)) =  1 + 2m 
\]
as $H \cap Z$ is a group of scalar matrices. 
Since $H/L$ is a completely reducible soluble subgroup of $\Sp_{2m}(r)$, Lemma~\ref{lemma:sizeboundsp} yields $|H/L| \leq r^{2 \cdot 2.244 m}  < r^{4.49 m}$. We therefore deduce from Lemma~\ref{lem:boundnormalsubgroup} that 
\[
I(H) \leq I(L) + \ell(H/L) \leq 1+2m  + {4.49} m \log r \leq 1 + 6.49 \log r^m.
\]
Finally,  $|E/(E \cap Z)| = r^{2m}$ 
gives  $|H| \leq |H \cap Z| \,r^{6.49 m}$.
\end{proof}

We now show that there exist soluble groups $H$ in Class $\mathcal{C}_6$ for which $I(H)$ attains, up to constants, the bound given in Proposition~\ref{prop:boundextraspecialcase}. To this end, we use explicit representations of these groups: see \cite{SHO92} or \cite{Holt_Roney-Dougal_2005}. 
Let $\omega$ denote a primitive $r$-th  root of unity in $\F_q$, let $x = \operatorname{diag}\left(1,\omega, \omega^2, \dots, \omega^{r-1}\right) \in \GL_r(q)$ and let $y$ be the permutation matrix in $\GL_r(q)$ corresponding to the $r$-cycle $(1, 2, \ldots, r)$. 
Let $I_t$ denote the identity of $\GL_t(q)$, and for $i \in \{1, \dots, m\}$, let $x_i = I_{r^{m-i}} \otimes x \otimes I_{r^{i-1}} \in \GL_{r^m}(q)$ and $y_i = I_{r^{m-i}} \otimes y \otimes I_{r^{i-1}} \in \GL_{r^m}(q)$. 
If $q \equiv 1 \pmod{4}$, let $\zeta$ denote a primitive element of $\F_q$ and set $z = \zeta^{(q-1)/4} I_{r^m}$. For $r = 2$, fix any $\alpha, \beta \in \F_q$ with $\alpha^2 + \beta^2 = -1$, and let
\[
x_1' = I_{2^{m-1}} \otimes \begin{pmatrix}
\alpha & \beta \\
\beta & -\alpha
\end{pmatrix}
 \quad \mbox { and } \quad 
y_1' = I_{2^{m-1}} \otimes \begin{pmatrix}
  0 & -1 \\
  1 & 0
\end{pmatrix}.
\]

\begin{lemma}[{\cite[Lemmas~9.1, 9.3 and 9.4]{Holt_Roney-Dougal_2005}}]\label{lemma:propxi}
The following hold for the matrices defined above. 
\begin{enumerate}
    \item For $i \in \{1, \dots, m\}$, the group $\langle x_i, y_i \rangle \cong r_+^{1+2}$, and if $i \neq j$, then $[x_i, x_j] = [y_i, y_j] = [x_i, y_j] = 1$. 
    \item The group $\langle x_1', y_1'\rangle \cong Q_8$, and $[x_1', x_i] = [y_1', x_i] = [x_1', y_i] = [y_1', y_i] = 1$ for $i \in \{ 2, \dots, m\}$. 
\end{enumerate}
\end{lemma}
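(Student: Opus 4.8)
The plan is to verify all the stated relations by direct matrix computation, reducing everything to the single tensor-factor matrices $x, y \in \GL_r(q)$ and the $2 \times 2$ blocks appearing in $x_1', y_1'$. First I would record the action on the standard basis $e_1, \dots, e_r$: writing $x e_j = \omega^{j-1} e_j$ and $y e_j = e_{j+1}$ (indices taken modulo $r$), one reads off $x^r = y^r = I$, and since $\omega$ is a primitive $r$-th root of unity and $y$ is an $r$-cycle, both generators have order exactly $r$. Comparing $xy\, e_j = \omega^{j} e_{j+1}$ with $yx\, e_j = \omega^{j-1} e_{j+1}$ gives the single relation $xy = \omega\, yx$, whence $xyx^{-1}y^{-1} = \omega I$ is a scalar of order $r$ lying in the centre of $\langle x, y\rangle$. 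To identify this group with $r_+^{1+2}$, note that every element is of the form $x^a y^b (\omega I)^c$ with $0 \le a,b,c < r$, so the order divides $r^3$; as the group is non-abelian with central derived subgroup $\langle \omega I\rangle \cong C_r$, the order is exactly $r^3$ and the centre coincides with the derived subgroup, so $\langle x, y \rangle$ is extraspecial. The one point needing care is the exponent: for odd $r$ one computes $(x^a y^b)^r = x^{ar} y^{br} \omega^{ab\binom{r}{2}} = I$, using that $r \mid \binom{r}{2}$ when $r$ is odd; this forces exponent $r$ and so pins down the type as $r_+^{1+2}$ rather than $r_-^{1+2}$.

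The commuting relations in part (1) then follow purely from the tensor structure. Since $x_i$ and $y_i$ are the identity on every tensor slot except the $i$-th, while $x_j, y_j$ are the identity off the $j$-th slot, for $i \neq j$ the two matrices act nontrivially on disjoint factors. Using $(U_1 \otimes \dots \otimes U_m)(V_1 \otimes \dots \otimes V_m) = U_1 V_1 \otimes \dots \otimes U_m V_m$, each slot of either product is computed independently, and the two orderings agree slot by slot; hence $[x_i, x_j] = [y_i, y_j] = [x_i, y_j] = I$. The same argument, applied with $x_1', y_1'$ supported on the first slot and $x_i, y_i$ ($i \ge 2$) supported elsewhere, yields the second batch of commutators in part (2).

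For the $Q_8$ statement I would set $A = \begin{pmatrix} \alpha & \beta \\ \beta & -\alpha\end{pmatrix}$ and $B = \begin{pmatrix} 0 & -1 \\ 1 & 0\end{pmatrix}$, so that $x_1' = I_{2^{m-1}} \otimes A$ and $y_1' = I_{2^{m-1}} \otimes B$. A direct calculation gives $A^2 = (\alpha^2 + \beta^2) I = -I$ (this is exactly where $\alpha^2 + \beta^2 = -1$ is used), $B^2 = -I$, and $AB = -BA$. These are the defining relations of $Q_8$, with $A$ and $B$ playing the roles of $i$ and $j$, and the group they generate is $\{\pm I, \pm A, \pm B, \pm AB\}$, of order $8$. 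Since tensoring with $I_{2^{m-1}}$ is a faithful homomorphism on $\langle A, B\rangle$, we conclude $\langle x_1', y_1'\rangle \cong Q_8$.

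I expect no genuine obstacle, as the lemma is a collection of explicit computations; the only step that is not purely mechanical is the exponent check for odd $r$, where invoking $r \mid \binom{r}{2}$ is essential to distinguish $r_+^{1+2}$ from $r_-^{1+2}$. One should also fix the tensor-factor ordering convention once and for all to ensure that the ``disjoint support'' argument for the commuting relations is applied consistently.
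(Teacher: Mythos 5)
The paper offers no proof of this lemma at all --- it is quoted directly from Holt and Roney-Dougal (Lemmas 9.1, 9.3 and 9.4 of their paper) --- so a self-contained computational verification is a perfectly reasonable route, and most of yours is sound: the relation $xy = \omega\, yx$ and the resulting identification of $\langle x, y\rangle$ as extraspecial of order $r^3$, the disjoint-tensor-slot argument for every one of the commuting relations, and the computation $A^2 = B^2 = -I$, $AB = -BA$ (using $\alpha^2 + \beta^2 = -1$) identifying $\langle x_1', y_1'\rangle \cong Q_8$ are all correct.

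There is, however, one genuine gap: your identification of the type as $+$ rests entirely on the exponent computation, which you yourself flag as the key non-mechanical step, and this only distinguishes $r_+^{1+2}$ from $r_-^{1+2}$ when $r$ is odd. The lemma also asserts $\langle x_i, y_i\rangle \cong 2_+^{1+2} \cong D_8$ when $r = 2$, and the paper genuinely uses this case: in the proof of Proposition~\ref{prop:iextraspecial}, the group $E \cong 2_+^{1+2m}$ is taken to be $\langle x_1, \dots, x_m, y_1, \dots, y_m\rangle$, a central product of the groups $\langle x_i, y_i\rangle$, so each factor must be $D_8$ and not $Q_8$. For $r = 2$ the exponent criterion says nothing, since $D_8$ and $Q_8$ both have exponent $4$. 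The fix is one line: when $r = 2$, the matrices $x = \operatorname{diag}(1,-1)$ and the swap matrix $y$ are two distinct non-commuting involutions of $\langle x, y\rangle$, whereas $Q_8$ has a unique involution; hence $\langle x, y\rangle \cong D_8 = 2_+^{1+2}$. Everything else in your argument (non-abelian, order $8$, derived subgroup equal to the centre $\langle -I\rangle$) applies verbatim for $r = 2$, so with this sentence added the proof is complete.
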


\begin{Proposition}\label{prop:iextraspecial}
Suppose that $q-1$ is divisible by $r$, and that $H$ is a primitive soluble subgroup of $\GL_{r^m}(q)$ in Class $\mC_6$. Let $E$ be the extraspecial type normal subgroup of $H$. Then 
\[
I(H) \geq \begin{cases}
m &\text{if } E \cong 2_{-}^{1+2m}\\
m+1 &\text{otherwise}
\end{cases} 
.\] 
\end{Proposition}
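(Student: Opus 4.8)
The plan is to exhibit, for a suitable subgroup $S \le H$, an explicit irredundant base of the required length and then invoke the inequality $I(S) \le I(H)$ from Lemma~\ref{lem:boundnormalsubgroup}. Since $E \le H$, all the generators $x_i, y_i$ (and $x_1', y_1'$ in the $2^{2+2m}$ and $2^{1+2m}_-$ situations) of Lemma~\ref{lemma:propxi} lie in $H$, as does a generator $z_0$ of $Z(E)$, which is a scalar matrix of order $r$. I would identify $\F_q^{r^m}$ with the tensor product $U \otimes W$, where $U = \F_q^r$ is the first tensor factor and $W = \F_q^{r^{m-1}}$, and index the natural basis by tuples $(a_1, \dots, a_m)$ so that each diagonal generator $x_i$ acts on $e_{(a_1, \dots, a_m)}$ by the scalar $\omega^{a_i}$.

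The core of the argument is a single abelian computation. Let $A$ be generated by the diagonal $x_i$ that are available: in the generic case $A = \langle x_1, \dots, x_m\rangle \cong C_r^m$, using the commutation relations of Lemma~\ref{lemma:propxi}. The basis vector $e_{\mathbf 0}$ indexed by $(0, \dots, 0)$ is fixed by every element of $A$ but by no nontrivial scalar, so for $S := \langle A, z_0\rangle$ one computes $S_{e_{\mathbf 0}} = A$, a strict drop since $z_0 \notin A$. Working now within $A = S_{e_{\mathbf 0}}$, for each available $j$ the vector $e_{\mathbf 0} + e_{\mathbf 1_j}$, where $\mathbf 1_j$ has a single $1$ in coordinate $j$, has $A$-stabiliser $\langle x_i : i \ne j\rangle$, because $x_j$ multiplies $e_{\mathbf 1_j}$ by $\omega \ne 1$ while all other generators fix both summands. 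Feeding these vectors in one at a time produces the strictly descending chain $A > \langle x_2, \dots, x_m\rangle > \dots > 1$ of length $m$. Prepending $e_{\mathbf 0}$ thus yields an irredundant base $(e_{\mathbf 0},\, e_{\mathbf 0} + e_{\mathbf 1_1},\, \dots,\, e_{\mathbf 0} + e_{\mathbf 1_m})$ for $S$ of length $m+1$, giving $I(H) \ge I(S) \ge m+1$ in every case except $E \cong 2^{1+2m}_-$.

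The case distinction is governed entirely by how many diagonal generators survive. When $E \cong 2^{1+2m}_-$ the first tensor factor carries $\langle x_1', y_1'\rangle \cong Q_8$ rather than a $D_8$, and $Q_8$ has a unique involution (its central $-I$) and no non-central element with eigenvalue $1$, so it acts freely on $U \setminus \{0\}$ and contributes no diagonal generator. Hence only $A' = \langle x_2, \dots, x_m\rangle \cong C_2^{m-1}$ is available; running the same construction (with $z_0 = -I$) gives the base $(e_{\mathbf 0},\, e_{\mathbf 0} + e_{\mathbf 1_2},\, \dots,\, e_{\mathbf 0} + e_{\mathbf 1_m})$ of length $m$, so $I(H) \ge m$. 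The symplectic-type case $E \cong 2^{2+2m}$ falls under the generic bound, since $2^{1+2m}_+ \le E$ supplies all $m$ diagonal generators $x_1, \dots, x_m$.

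The step requiring the most care is the verification that $S_{e_{\mathbf 0}}$ equals $A$ exactly: because $S$ acts on $U \otimes W$, an element $p \otimes t$ fixes a simple tensor $u \otimes w$ only up to a coupled scalar ($p u = \mu u$ and $t w = \mu^{-1} w$), so I must check that choosing the all-zero index vector forces $\mu = 1$ and pins the stabiliser down to the diagonal part. The second delicate point is the claim that $Q_8$ acts freely on nonzero vectors, which rests on its non-central elements having order $4$ and characteristic polynomial $\lambda^2 + 1$, hence no eigenvalue $1$; this is precisely what removes one level in the $2^{1+2m}_-$ case and produces the genuine discrepancy between the two bounds.
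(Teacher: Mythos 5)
Your proposal is correct and takes essentially the same route as the paper: the paper also produces an explicit irredundant chain from the diagonal generators together with a central scalar and then invokes the monotonicity $I(S)\le I(H)$ of Lemma~\ref{lem:boundnormalsubgroup}, the only difference being that it works inside the full group $E$ using the pure tensors $w_i = e_1 \otimes \dots \otimes e_1 \otimes e_2 \otimes \dots \otimes e_2$, whereas you work inside the abelian subgroup $S = \langle x_1,\dots,x_m, z_0\rangle$ with the perturbed vectors $e_{\mathbf{0}} + e_{\mathbf{1}_j}$, a cosmetic variation that renders all stabiliser computations diagonal (so your worry about coupled scalars on simple tensors never arises). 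Note that both arguments tacitly replace $E$ by the standard copy generated by the matrices of Lemma~\ref{lemma:propxi}, which is legitimate---and which the paper states explicitly---because $\GL_{r^m}(q)$ has a unique conjugacy class of absolutely irreducible subgroups of each extraspecial type, as recorded in Subsection~\ref{subsec:c6}.
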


\begin{proof}
We use the matrices 
defined above. Let $\{e_1, \dots, e_r\}$ denote the standard basis of $\F_q^r$. Then $e_i$ is an eigenvector of $x$ with eigenvalue $\omega^{i-1}$, so for $i \in \{1, \dots, m\}$ and $k_1, \dots, k_m \in \{1, \dots, r\}$, the vector $e_{k_1} \otimes \dots \otimes e_{k_m}$ is an eigenvector of $x_i$ with eigenvalue $\omega^{k_i -1}$.  For $i \in \{0, \ldots, m\}$, we let
$
w_i = e_1 \otimes \dots \otimes e_1 \otimes e_2 \otimes \dots \otimes e_2 \in \F_q^r \otimes \dots \otimes \F_q^r
 $
(with $m-i$ terms $e_1$ and $i$ terms $e_2$).

We start by showing that  $I(E)$ satisfies the given bound. 
First suppose that 
$E$ is isomorphic to $r_+^{1+2m}$ or $2^{2+2m}$. Then by Lemma~\ref{lemma:propxi}, up to conjugacy in $\GL_d(q)$ the group $E$ is equal to $\langle x_1, \ldots, x_m, y_1, \ldots,  y_m\rangle$  or $\langle x_1, \dots, x_m, y_1, \dots, y_m, z\rangle$, respectively. 
Thus in both cases $E_{w_0} = \langle x_1, \ldots, x_m\rangle \cong C_r^m$. For $i \in \{1, \ldots, m\}$,  the element $x_{i} \in E_{(w_0, \dots, w_{i-1})} \setminus E_{(w_0, \dots, w_i)}.$ 
Since $E_{w_0} \cong C_r^m$, we obtain $E_{(w_0, \dots, w_m)}= 1$ and hence $(w_0, \dots, w_m)$ is an irredundant base for $E$.  
If instead $E \cong 2^{1+2m}_{-}$, so that $E = \langle x_1', x_2, \ldots, x_m, y_1',
y_2 \ldots  y_m \rangle$, then the same argument shows that $E_{w_0} \subseteq \langle x_2, \dots, x_m\rangle$. Hence the sequence $(w_0, w_2, \ldots, w_m)$ is an irredundant base for $E$.

Finally, it is immediate from Lemma~\ref{lem:boundnormalsubgroup} that $I(H) \ge I(E)$, so the result follows. 
\end{proof}

\subsection{Tensor decomposable groups}\label{subsec:tensorinduced}
 
We now consider tensor products of $\mC_6$ groups, so let
$d = r_1^{m_1} \cdots r_l^{m_l}$ be the prime factorisation of~$d$, and assume that $r_1, \dots, r_l$ divide $q-1$, so that for each $i$ the group $\GL_{r_i^{m_i}}(q)$ contains an absolutely irreducible  group of extraspecial type. Recall that $H$ denotes a soluble subgroup of $\GL_d(q)$.
%



\begin{Proposition}\label{prop:tensorproductcase}
For $1 \le i \le l$, let $E_i$ be an absolutely irreducible subgroup of $\GL_{r_i^{m_i}}(q)$ of extraspecial type, and suppose that
$H$ has a normal subgroup $T:=E_1 \otimes \cdots \otimes E_l$ such that each $E_i/Z(E_i)$ is a completely reducible $H$-module.
Then $I(H) \leq 1 + 6.49 \log d$,  and conversely \[
I(H) \geq I(T) \geq 1 + \sum_{j = 1}^l (I(E_j) -1) \geq \sum_{j = 1}^l m_j = \Omega(d).
\]
\end{Proposition}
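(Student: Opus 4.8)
The plan is to establish the three inequalities of the chain separately. The middle one, $I(H)\ge I(T)$, is immediate from Lemma~\ref{lem:boundnormalsubgroup} since $T\trianglelefteq H$; the upper bound $I(H)\le 1+6.49\log d$ mirrors the reduction in Proposition~\ref{prop:boundextraspecialcase}; and the lower bound $I(T)\ge 1+\sum_j(I(E_j)-1)$ comes from splicing together irredundant bases of the individual factors, after which Proposition~\ref{prop:iextraspecial} yields $1+\sum_j(I(E_j)-1)\ge\sum_j m_j=\Omega(d)$. Before starting I would record one structural fact that is used throughout: because the $r_j$ are distinct primes, the multiplication map $E_1\times\cdots\times E_l\to T$ is an isomorphism. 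Indeed its kernel consists of tuples of scalars whose product is trivial, and pairwise coprimality of the orders $r_j$ forces each scalar to be $1$. Thus I may treat elements of $T$ as tuples $(g_1,\dots,g_l)$ acting on $W_1\otimes\cdots\otimes W_l$ via $g_1\otimes\cdots\otimes g_l$, and $Z(T)=\prod_j Z(E_j)$.

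For the upper bound I set $L=(H\cap Z)T\trianglelefteq H$, where $Z$ denotes the scalars. As $T$ is absolutely irreducible (its enveloping algebra contains the tensor product of the full matrix algebras) we have $C_H(T)=H\cap Z$, so $I(H\cap Z)\le 1$ and $L/(H\cap Z)\cong T/Z(T)\cong\prod_j C_{r_j}^{2m_j}$ has chain length $\sum_j 2m_j$; Lemma~\ref{lem:boundnormalsubgroup} then gives $I(L)\le 1+\sum_j 2m_j$. The key point is to identify $H/L$ with a completely reducible soluble subgroup of $\prod_j\Sp_{2m_j}(r_j)$. The conjugation action of $H$ on the symplectic spaces $E_j/Z(E_j)$ gives $\rho\colon H\to\prod_j\Sp_{2m_j}(r_j)$, and I would show $\ker\rho=L$: one inclusion is clear, while for the other an element of $\ker\rho$ induces a central automorphism of $T$, and a $\mathrm{Hom}(T/Z(T),Z(T))$ count using coprimality shows $\Inn(T)$ exhausts the central automorphisms, whence $\ker\rho\le T(H\cap Z)=L$. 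The hypothesis that each $E_j/Z(E_j)$ is a completely reducible $H$-module lets me apply Lemma~\ref{lemma:sizeboundsp} to each projection, giving $|H/L|<\prod_j r_j^{4.49 m_j}$, so $\ell(H/L)\le\log|H/L|$. Lemma~\ref{lem:boundnormalsubgroup} then assembles these into $I(H)\le 1+\sum_j(2m_j+4.49 m_j\log r_j)\le 1+6.49\log d$, the final step using $\log r_j\ge 1$.

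For the lower bound I would splice bases. Fix a longest irredundant base $(a_1^{(j)},\dots,a_{n_j}^{(j)})$ for each $E_j$, so $n_j=I(E_j)$, and take $v_0=a_1^{(1)}\otimes\cdots\otimes a_1^{(l)}$ together with $v_i^{(j)}=a_1^{(1)}\otimes\cdots\otimes a_i^{(j)}\otimes\cdots\otimes a_1^{(l)}$ (the $j$-th slot carrying $a_i^{(j)}$) for $2\le i\le n_j$. The main obstacle is controlling $T_{v_0}$: a tuple $(g_1,\dots,g_l)$ fixes $v_0$ precisely when each $g_j$ scales $a_1^{(j)}$ by some $\mu_j$ with $\prod_j\mu_j=1$. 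Here I would use that $E_j$ is an $r_j$-group, so every eigenvalue of every element is a root of unity of $r_j$-power order; the distinct-primes argument then forces $\mu_j=1$ for all $j$, giving $T_{v_0}=\prod_j (E_j)_{a_1^{(j)}}$, the genuine product of vector stabilisers. Once inside this product the conditions decouple: for $(g_1,\dots,g_l)\in T_{v_0}$ the remaining slots of $v_i^{(j)}$ are fixed exactly, so they admit no compensating scalar and $v_i^{(j)}$ is fixed iff $g_j$ fixes $a_i^{(j)}$. Appending the $v_i^{(j)}$ therefore realises, in the $j$-th coordinate, the strictly descending stabiliser chain of the base of $E_j$, producing an irredundant base for $T$ of length $1+\sum_j(n_j-1)$.

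Finally I combine this with Proposition~\ref{prop:iextraspecial}. Every factor satisfies $I(E_j)\ge m_j+1$ except possibly the unique factor with $r_j=2$ of type $2^{1+2m_j}_-$, for which only $I(E_j)\ge m_j$ is guaranteed; the leading $+1$ absorbs this single deficit, so $1+\sum_j(I(E_j)-1)\ge\sum_j m_j=\Omega(d)$, completing the chain. I expect the identification $\ker\rho=L$ (via the central-automorphism count) in the upper bound, and the computation of $T_{v_0}$ via the $r_j$-power-order eigenvalue argument in the lower bound, to be the two steps requiring the most care.
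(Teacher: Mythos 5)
Your proposal is correct, but its two halves sit differently relative to the paper. The lower bound is essentially the paper's own argument: the same splicing of factor-wise irredundant bases into pure tensors, and the same bookkeeping in which the leading $+1$ absorbs the possible deficit of a $2^{1+2m}_{-}$ factor. Your computation of $T_{v_0}$ via $r_j$-power-order eigenvalues and coprimality is a genuine (and correct) refinement: it shows the spliced sequence is itself a base, whereas the paper only verifies that it is an irredundant sequence and then extends it to one. The upper bound is where you genuinely diverge. The paper argues factor-wise: with $N_i$ the group induced by $H$ on $W_i$, it uses $H\le N_1\otimes\cdots\otimes N_l$ and applies Proposition~\ref{prop:boundextraspecialcase} to each $N_i$, obtaining $|H/(H\cap Z)|\le\prod_i|N_i/(N_i\cap Z)|\le\prod_i r_i^{6.49 m_i}$ and hence $I(H)\le 1+\log|H/(H\cap Z)|$; the kernel question you grapple with is thereby delegated to the Class $\mC_6$ structure theory quoted in Subsection~\ref{subsec:c6}. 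You instead rerun the proof of Proposition~\ref{prop:boundextraspecialcase} globally, mapping $H$ onto a completely reducible soluble subgroup of $\prod_j\Sp_{2m_j}(r_j)$ and identifying the kernel by hand; both routes rest on Lemma~\ref{lemma:sizeboundsp} and the same split $6.49=2+4.49$, so what your version buys is self-containedness at the cost of the kernel computation.

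That kernel computation contains the one imprecision worth fixing. The claim that $\Inn(T)$ exhausts the central automorphisms of $T$ is false when some $E_j$ has symplectic type: there $Z(E_j)\cong C_4$ but $E_j'\cong C_2$, and the automorphism inverting the central $C_4$ (and fixing the extraspecial part) is trivial on $T/Z(T)$ yet non-inner. In general central automorphisms inject into $\mathrm{Hom}(T/T',Z(T))$, which in that case is strictly larger than $\Inn(T)$. What your $\mathrm{Hom}(T/Z(T),Z(T))$ count actually proves is that $\Inn(T)$ exhausts those central automorphisms that in addition fix $Z(T)$ pointwise --- and that is all you need, since elements of $\ker\rho$ act by conjugation and $Z(T)$ consists of scalar matrices, so the automorphisms they induce automatically fix $Z(T)$ pointwise. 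With that one observation inserted, your identification $\ker\rho=L$ and hence the whole argument is complete.
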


\begin{proof}
Noting that the groups~$E_i$ have coprime orders, we write  $T = E_1 \times  \cdots \times E_l$, acting on the decomposition $V = W_1 \otimes \cdots \otimes W_l$.  We first prove the upper bound. For $1 \le i \le \ell$ let  
$N_i$ be the group induced by $H$ on $W_i$, so that 
$H \leq N_1 \otimes  \dots \otimes N_l$.
Proposition~\ref{prop:boundextraspecialcase} bounds $|N_i/ (N_i \cap Z(\GL_{r_i^{m_i}}(q)))| \leq r_i^{6.49 m_i}$ for all $i$, so 
\[ I(H) \leq 1 + \log |H/(H \cap Z)| \leq 1 + \sum_{i = 1}^l \log |N_i/(N_i \cap Z)| \leq 1 + 6.49 \sum_{i = 1}^l m_i \log r_i = 1 + 6.49 \log d, \]
and the upper bound follows.

Turning now to the lower bound, the first inequality is clear from Lemma~\ref{lem:boundnormalsubgroup}. 
For $j \in \{1, \ldots, l\}$, let $(w_1^j, \dots, w_{I(E_j)}^j)$ be an irredundant base of maximal size for the action of $E_j$ on $W_j$, and for  $t \in \{1, \ldots, I(E_j)\}$, let 
\[
v_{jt} 
= w_1^1 \otimes \dots \otimes w_1^{j-1} \otimes w_t^j \otimes w_1^{j+1} \otimes \dots \otimes w_1^l.
\] We claim that $S  = (v_{11}, \ldots, v_{1 I(E_1)}, v_{22}, \dots, v_{2 I(E_2)}, v_{32}, \ldots,  v_{l I(E_l)})$ is an irredundant sequence  for $T$. 

For all $j \in \{1, \dots, t\}$ and $t \in \{1, \ldots,  I(E_j)\}$, since $E_j \neq 1$ there exists an $h_j \in E_j$ that stabilises $v_1^j, \ldots, v_{t-1}^j$, but not $v_t^j$. Then $(1, \dots, 1,h_j,1, \dots,1)$ stabilises $v_{11}, \dots, v_{j(t-1)}$, but not~$v_{jt}$. This shows that 
$T > T_{v_{11}} >  \dots > T_{(S)}$.
Since every irredundant sequence can be extended to an irredundant base, the group~$T$ has an irredundant base of size at least $|S| = I(E_1)+ \sum_{i = 2}^l (I(E_j) -1)$.  Proposition~\ref{prop:iextraspecial} proves that $I(E_j) \geq  m_j$ if $r_j = 2$ and $I(E_j) \geq  m_j +1$ otherwise.  
Since the $r_j$ are coprime, without loss of generality $r_j > 2$ for $j > 1$ so
\[I(T) \geq  
I(E_1) + \sum_{j = 2}^l(I(E_j) - 1) \geq \sum_{j = 1}^l  m_j.\qedhere\] 
\end{proof}

\subsection{Proof of Theorem~\ref{theo:main}}\label{sec:proofmain1}

In this subsection, we first collect results on the structure of primitive linear groups, then prove Theorem~\ref{theo:main} for primitive groups, and finally prove it for imprimitive groups. 

\begin{lemma}\label{lemma:prelimsemilinear}
Suppose that $H$ is a primitive subgroup of $\GL_d(q)$, and let~$A$ be an abelian normal subgroup of~$H$.
 Then the ring $\F_qA$ is a field $\F_{q^a}$ for some divisor $a$ of $d$.
Furthermore, there is a natural embedding $\psi$ of $C:= C_{H}(A)$ into $\GL_e(q^a)$, where $ea = d$, such that $\psi(A) \le Z(\GL_e(q^a))$,  and $H \leq C.\mathrm{Gal}(\F_{q^a}:\F_q)$.
\end{lemma}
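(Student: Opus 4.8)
The plan is to exploit Clifford theory together with the primitivity of $H$ to show that $V := \F_q^d$ is a \emph{homogeneous} $A$-module, and then to deduce from Schur's lemma that the faithful commutative action of $\F_q A$ forces this ring to be a field. From there the semilinear structure is largely formal bookkeeping.

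First I would record that a primitive linear group is irreducible, so $V$ is an irreducible $H$-module. Since $A \trianglelefteq H$ is abelian, Clifford's theorem decomposes $V|_A = V_1 \oplus \dots \oplus V_t$ into its homogeneous ($A$-isotypic) components, and $H$ permutes the $V_i$ amongst themselves. This is a direct sum decomposition of $V$ into non-zero subspaces preserved (as a set) by $H$; were $t \geq 2$, it would contradict the primitivity of $H$. Hence $t = 1$ and $V \cong S^{\oplus e}$ for a single simple $A$-module $S$ and some $e \geq 1$.

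Next, set $R = \F_q A$, the commutative $\F_q$-subalgebra of $\operatorname{End}_{\F_q}(V)$ generated by $A$; the $A$-submodules of $V$ are exactly its $R$-submodules, so $S$ is also a simple $R$-module. As a subalgebra of $\operatorname{End}_{\F_q}(V)$, $R$ acts faithfully on $V$, and since $V \cong S^{\oplus e}$ we have $\operatorname{Ann}_R(V) = \operatorname{Ann}_R(S) = 0$. Because $R$ is commutative and $S$ is simple, $\operatorname{Ann}_R(S)$ is a maximal ideal of $R$ with $R/\operatorname{Ann}_R(S)$ a field; faithfulness then gives $\operatorname{Ann}_R(S) = 0$, so $R$ itself is a field, necessarily $\F_{q^a}$ for $a = [R : \F_q]$. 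Viewing $S$ as an $\F_{q^a}$-space shows $\dim_{\F_q} S = a$, whence $d = ea$ and $a \mid d$.

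Finally I would install the $\F_{q^a}$-structure. The field $\F_{q^a} = R \subseteq \operatorname{End}_{\F_q}(V)$ makes $V$ an $e$-dimensional $\F_{q^a}$-space. Any $g \in C = C_H(A)$ commutes with $A$, hence with all of $R = \F_q A$, and therefore with the $\F_{q^a}$-scalars; thus $g$ is $\F_{q^a}$-linear, giving the embedding $\psi : C \hookrightarrow \GL_e(q^a)$. As $A \subseteq \F_{q^a}$ acts by $\F_{q^a}$-scalars, $\psi(A) \le Z(\GL_e(q^a))$. For general $g \in H$, since $g$ normalises $A$ it normalises $R = \F_q A$, and conjugation by $g$ restricts to an $\F_q$-algebra automorphism of $\F_{q^a}$; this defines a homomorphism $H \to \mathrm{Gal}(\F_{q^a} : \F_q)$ with kernel $C_H(R) = C$. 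Equivalently, the identity $g(\lambda v) = (g\lambda g^{-1})(gv)$ exhibits $H$ as acting $\F_q$-semilinearly on $V$, so $H \le C.\mathrm{Gal}(\F_{q^a}:\F_q) \le \GamL_e(q^a)$, as required. The main obstacle is the pair of steps collapsing the Clifford decomposition to a single isotypic component (this is precisely where primitivity is used) and deducing that the faithful irreducible commutative algebra $\F_q A$ is a field; the subsequent semilinear and Galois argument is routine.
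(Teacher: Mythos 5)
Your proof is correct, and it is genuinely more self-contained than the paper's own. Both arguments pivot on the same first step: since $A$ is normal in the primitive group $H$, Clifford's theorem makes $V|_A$ semisimple with $H$ permuting the isotypic components, and primitivity forces a single component, i.e.\ $V \cong S^{\oplus e}$ is $A$-homogeneous. At that point the paper stops working: it cites \cite[Lemma~1.10]{LucchiniMenegazzoMorigi01} for everything except the final claim --- the field structure of $\F_q A$, the divisibility $a \mid d$, the embedding $\psi \colon C \hookrightarrow \GL_e(q^a)$, and the centrality of $\psi(A)$ --- and then observes, exactly as you do, that since $\F_q A$ is a field, conjugation embeds $H/C$ into $\operatorname{Gal}(\F_{q^a}:\F_q)$. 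You instead prove the cited content directly: the commutative-algebra step (for a commutative ring $R$ the annihilator of a simple module is a maximal ideal, so faithfulness of $R = \F_q A \subseteq \operatorname{End}_{\F_q}(V)$ on $V \cong S^{\oplus e}$ forces $\operatorname{Ann}_R(S) = 0$ and hence $R$ a field), the dimension count $d = ea$ via $\dim_{\F_q} S = a$, and the explicit semilinear bookkeeping. What the paper buys by citing is brevity; what your route buys is a self-contained argument that exposes exactly where each hypothesis enters: primitivity only for homogeneity, normality of $A$ for both the Clifford step and the Galois action, and faithfulness (automatic from $R$ being a subalgebra of $\operatorname{End}_{\F_q}(V)$) for the field structure. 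One small point deserves a sentence in your write-up: identifying the kernel of $H \to \operatorname{Gal}(\F_{q^a}:\F_q)$ with $C = C_H(A)$ uses $C_H(A) = C_H(\F_q A)$, which holds because an element commuting with $A$ and with the $\F_q$-scalars commutes with the algebra they generate; this is easy but is the glue making ``kernel $= C$'' true, so it should be stated rather than left implicit.
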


\begin{proof}
Since  $H$ is primitive, $A$ acts homogeneously on $V = \F_q^d$,  so all claims except the final one are as stated in \cite[Lemma~1.10]{LucchiniMenegazzoMorigi01}.
Finally, since $\F_q A$ is a field, the group  $H/C$ embeds into $\operatorname{Gal}(\F_{q^a} : \F_q)$.  
\end{proof}

\begin{lemma}\label{lem:prim_not_semilin} 
    Suppose that $H$ is  a primitive subgroup of $\GL_d(q)$ that is not semilinear, and that the Fitting subgroup  $F:= F(H)$ of~$H$ is absolutely irreducible. 
    Then $F = (H \cap Z) \circ E_{r_1} \otimes \cdots \otimes E_{r_l}$, where each $E_{r_i}$ is an absolutely irreducible extraspecial $r_i$-subgroup of $\GL_{r_i^{m_i}}(q)$ for a distinct prime $r_i$, of exponent $r_i$ if $r_i$ is odd, and $d = r_1^{m_1} \cdots r_l^{m_l}$.
\end{lemma}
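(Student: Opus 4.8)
The plan is to analyze the structure of the Fitting subgroup $F$ of a primitive linear group directly, using the standard theory of primitive linear groups over finite fields. Since $F$ is nilpotent, it decomposes as a direct product of its Sylow $r$-subgroups, $F = \prod_r F_r$, and the key will be to identify the structure of each $F_r$ together with the way the central scalars $H \cap Z$ fit in.

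First I would record that because $H$ is primitive, every abelian characteristic subgroup of $F$ — in particular every abelian characteristic subgroup $A$ of each Sylow subgroup $F_r$ — must act homogeneously, and by Lemma~\ref{lemma:prelimsemilinear} the ring $\F_q A$ is a field $\F_{q^a}$ with $H \le C_H(A).\mathrm{Gal}(\F_{q^a}:\F_q)$. The hypothesis that $H$ is \emph{not} semilinear should be leveraged here: it forces the Galois action to be trivial in the relevant sense, so that the maximal abelian characteristic subgroups of $F$ consist only of scalars, i.e.\ lie in $H \cap Z$. This is the classical criterion (see the theory of groups of central type / the analysis in \cite{SHO92} or \cite{LucchiniMenegazzoMorigi01}) that pins each noncentral Sylow subgroup $F_r$ down to being of extraspecial or symplectic type modulo scalars. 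Concretely, I would argue that $Z(F_r)$ is cyclic and that $F_r/(F_r \cap Z)$ is elementary abelian, whence $F_r$ is a central product of $(F_r \cap Z)$ with a group of extraspecial type $E_{r}$; the exponent restriction (exponent $r$ for odd $r$) comes from the requirement that $E_r$ be realisable as an absolutely irreducible subgroup over $\F_q$, which by the $\mathcal{C}_6$ discussion needs $r \mid q-1$ and selects the exponent-$r$ extraspecial group.

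Next I would assemble these local pieces multiplicatively. Each $E_{r_i}$ is absolutely irreducible on a space of dimension $r_i^{m_i}$ (this is forced by $Z(E_{r_i}) \cong C_{r_i}$ acting by a faithful scalar, so the unique faithful irreducible has that dimension), and since the $E_{r_i}$ centralise one another and sit inside the absolutely irreducible $F$, their tensor product $E_{r_1} \otimes \cdots \otimes E_{r_l}$ is absolutely irreducible of dimension $\prod_i r_i^{m_i}$. Because $F$ is absolutely irreducible and contains this tensor product together with all the scalars in $H \cap Z$, comparing dimensions gives $d = r_1^{m_1}\cdots r_l^{m_l}$ and the central product decomposition $F = (H\cap Z) \circ E_{r_1} \otimes \cdots \otimes E_{r_l}$, which is exactly the claimed form.

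I expect the main obstacle to be the step where the non-semilinearity hypothesis is converted into the statement that the maximal abelian characteristic subgroups of $F$ are scalar. The danger is a characteristic abelian subgroup $A$ that is noncentral: Lemma~\ref{lemma:prelimsemilinear} only tells us $\F_q A$ is a field and $H$ embeds into $C_H(A).\mathrm{Gal}(\F_{q^a}:\F_q)$, so I must rule out $a > 1$ precisely by using that $H$ is not semilinear — otherwise $H$ would act semilinearly over the larger field $\F_{q^a}$, contradicting the hypothesis. Making this genuinely watertight (that \emph{any} noncentral abelian characteristic subgroup would produce a semilinear structure, not merely one containing $A$) is the delicate point; the remaining assembly into a tensor product is then essentially bookkeeping with central products and dimensions.
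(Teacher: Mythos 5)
Your overall route is the same as the paper's: use Lemma~\ref{lemma:prelimsemilinear} together with non-semilinearity to force every abelian normal subgroup of $H$ to be scalar, then apply the classification of $r$-groups all of whose characteristic abelian subgroups are cyclic (P.~Hall's theorem, which is what the paper cites as \cite[Corollary~2.4.5]{SHO92}) to each Sylow subgroup $O_r(F)$, and finally use absolute irreducibility of $F$ to obtain the tensor decomposition and $d = r_1^{m_1}\cdots r_l^{m_l}$. The step you flag as delicate is in fact immediate: if $A \unlhd H$ is abelian and nonscalar, then $\F_q A = \F_{q^a}$ with $a>1$, and the embedding $\psi$ of $C_H(A)$ into $\GL_e(q^a)$ together with $H \le C_H(A).\mathrm{Gal}(\F_{q^a}:\F_q)$ exhibits $H$ as a subgroup of $\GamL_e(q^a)$, i.e.\ $H$ is semilinear, a contradiction; and abelian characteristic subgroups of $O_r(F)$ are characteristic in $F$, hence normal in $H$, so the conclusion applies to them.

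The genuine error is your justification of the exponent restriction for odd $r$. You claim that realisability over $\F_q$ ``selects the exponent-$r$ extraspecial group''; this is false. When $r \mid q-1$, the exponent-$r^2$ group $r_-^{1+2m}$ \emph{also} has a faithful absolutely irreducible representation of dimension $r^m$ over $\F_q$: its character vanishes off the centre and takes values in $\mathbb{Q}(\zeta_r)$, and Schur indices over finite fields are trivial, so the representation is realisable over $\F_q(\zeta_r)=\F_q$. Thus representation theory cannot distinguish the two types, and the exclusion of exponent $r^2$ must be structural --- it is precisely part of the classification theorem you (and the paper) cite. Concretely: if the extraspecial factor of $O_r(F)$ had exponent $r^2$ and could not be exchanged for one of exponent $r$ inside the central product, then $O_r(F)$ would contain a noncyclic characteristic abelian subgroup, for instance $Z\bigl(\Omega_1(r_-^{1+2m})\bigr) \cong C_r \times C_r$, and this would be a nonscalar abelian normal subgroup of $H$, contradicting the first step. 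Relatedly, your plan to ``argue concretely'' that $F_r/(F_r \cap Z)$ is elementary abelian is circular: that assertion is the conclusion of the classification theorem, not an input to it. The fix is simply to invoke the full statement of \cite[Corollary~2.4.5]{SHO92} (cyclic, or cyclic central product with extraspecial of exponent $r$ for odd $r$), as the paper does, rather than attempt the representation-theoretic selection.
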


\begin{proof}
By Lemma~\ref{lemma:prelimsemilinear}, since $H$ is not semilinear, every abelian normal subgroup of $H$ is scalar. 
    By \cite[Corollary~2.4.5]{SHO92} each Sylow subgroup $O_r(F)$ of $F$ is either 
    cyclic or the central product of a cyclic group and an extraspecial $r$-group, of exponent $r$ if $r$ is odd. Since each $O_r(F)$ is characteristic in $H$, each cyclic 
    central factor of $F$
 is scalar. Let $r_1, \ldots, r_l$ be the primes for which $O_{r_i}(F)$ is not cyclic, and let $E_{r_1}, \ldots, E_{r_l}$ be the corresponding extraspecial $r_i$-groups. 
    Then each $E_{r_i}$ is normal in $H$, so acts homogeneously, and by assumption acts absolutely irreducibly on each of its  (pairwise isomorphic) irreducible constituents. As in Subsection~\ref{subsec:c6}, this action embeds $E_{r_i}$ in 
    $\GL_{r_i^{m_i}}(q)$, as required. 
    \end{proof}

The structure of maximal primitive soluble subgroups of $\GL_d(q)$ is given by the following result.

\begin{lemma}\label{lemma:prelimdecomposition}
Let $M \leq \GL_d(q)$ be a maximal  primitive soluble group, let $F:= F(M)$ be the Fitting subgroup of $M$.
Then $M$ has a unique maximal abelian normal subgroup $A$, and  $A \cong \F_{q^a}^\ast$ for some divisor $a$ of $d$.  Under the embedding $\psi$ from Lemma~\ref{lemma:prelimsemilinear},  the group $\psi(F)$ is absolutely irreducible. Let $r_1, \dots, r_l$ be the primes for which $O_{r_i}(F)$ is not cyclic. Then 
$\psi(F)$ is a tensor product of extraspecial groups of order $r_i^{1+2m_i}$ and scalars, 
 $d/a = r_1^{m_1} \cdots r_l^{m_l}$, and $C_M(A)/F$ is isomorphic to a  subgroup of the direct product of completely reducible subgroups of symplectic groups $\Sp_{2m_i}(r_i)$.
\end{lemma}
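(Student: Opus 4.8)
The plan is to reduce to the case already treated in Lemma~\ref{lem:prim_not_semilin} by passing to the field generated by a maximal abelian normal subgroup, and then to read off the symplectic quotient from the $\mathcal{C}_6$ analysis of Subsection~\ref{subsec:c6}. First I would choose $A$ to be a maximal abelian normal subgroup of $M$; this is nontrivial since $M$ is soluble, so $F \neq 1$. By Lemma~\ref{lemma:prelimsemilinear} the ring $\F_q A$ is a field $\F_{q^a}$ with $a \mid d$, whence $A$ is cyclic with $A \leq \F_{q^a}^\ast$. The maximality of $M$ then forces $A = \F_{q^a}^\ast$: since $M$ normalises $A$ it normalises the group of $\F_{q^a}$-scalars (conjugation acting through the Galois group), and $M\langle\F_{q^a}^\ast\rangle$ is soluble and primitive, so $\F_{q^a}^\ast \leq M$. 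For uniqueness I would identify every maximal abelian normal subgroup with $Z(F)$: the inclusion $Z(F) \leq A$ is immediate because $Z(F)A$ is an abelian normal subgroup, and the reverse inclusion $A \leq Z(F)$, equivalently $F \leq C := C_M(A)$, again uses maximality of $M$ (in the extreme semilinear case one checks directly that the Fitting subgroup of the full semilinear group is its cyclic scalar subgroup). This yields simultaneously that $A$ is unique, that $A = Z(F) \cong \F_{q^a}^\ast$, and that $C_M(A)/F$ is meaningful.

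Next I would apply the embedding $\psi$ of Lemma~\ref{lemma:prelimsemilinear} to $C$, obtaining $\psi \colon C \hookrightarrow \GL_e(q^a)$ with $ea = d$ and $\psi(A)$ equal to the full scalar group $Z(\GL_e(q^a))$. The crucial claim is that, over $\F_{q^a}$, the group $\psi(C)$ is primitive and not semilinear and that $\psi(F)$ is absolutely irreducible; each part follows from the maximality of $A$, because a proper commuting field, a homogeneous component of multiplicity exceeding one, or a nontrivial semilinear action would all produce an abelian normal subgroup of $M$ strictly larger than $A$. Since $F = F(C)$ (as $F \lhd C \lhd M$ gives $F \leq F(C)$, while $F(C)$ is characteristic in $C$ hence normal and nilpotent in $M$, so $F(C) \leq F$), I may apply Lemma~\ref{lem:prim_not_semilin} to $\psi(C) \leq \GL_e(q^a)$. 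This gives $\psi(F) = (\psi(C) \cap Z) \circ E_{r_1} \otimes \cdots \otimes E_{r_l}$, where each $E_{r_i}$ is an absolutely irreducible extraspecial $r_i$-subgroup of $\GL_{r_i^{m_i}}(q^a)$ for distinct primes $r_i$, with $d/a = e = r_1^{m_1}\cdots r_l^{m_l}$, and where the $r_i$ are exactly the primes for which $O_{r_i}(F)$ is non-cyclic. This is the asserted tensor decomposition of $\psi(F)$.

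Finally, for the quotient I would use Subsection~\ref{subsec:c6}. As the $r_i$ are distinct, $\psi(C)$ normalises each factor $E_{r_i}$, so writing $K_i = E_{r_i}Z$ the conjugation action yields a homomorphism $\psi(C) \to \prod_{i=1}^l N_{\GL_{r_i^{m_i}}(q^a)}(K_i)/K_i \cong \prod_{i=1}^l \Sp_{2m_i}(r_i)$, each factor being symplectic because $E_{r_i}$ has exponent $r_i$ or is of symplectic type. Its kernel consists of the elements inducing an inner automorphism on every $E_{r_i}$; since each $E_{r_i} \leq \psi(F)$ and $C_{\psi(C)}(\psi(F)) = Z(\psi(F)) \leq \psi(F)$, this kernel is exactly $\psi(F)$, so $C_M(A)/F \cong \psi(C)/\psi(F)$ embeds in $\prod_i \Sp_{2m_i}(r_i)$. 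The $\mathcal{C}_6$ group induced on each tensor factor is primitive, so by the result quoted in Subsection~\ref{subsec:c6} the image in each $\Sp_{2m_i}(r_i)$ is completely reducible, giving the claim. I expect the main obstacle to be the first paragraph: using the maximality of $M$ to prove both the uniqueness of $A$ and that passage to $\F_{q^a}$ lands in the non-semilinear, absolutely-irreducible-Fitting regime, for it is precisely there that the clean decomposition and the well-definedness of $C_M(A)/F$ hold.
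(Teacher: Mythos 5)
Your first paragraph contains the fatal gap, and it is exactly at the step you flag as the ``extreme semilinear case''. You claim that maximality of $M$ forces $A \le Z(F)$, equivalently $F \le C_M(A)$, so that every maximal abelian normal subgroup equals $Z(F)$, and you support this with the assertion that the Fitting subgroup of the full semilinear group is its cyclic scalar subgroup. That assertion is false. Take $q = 7$, $d = 2$ and $M = \GamL_1(49) \le \GL_2(7)$, so $M = \F_{49}^\ast \rtimes \langle \phi \rangle$ with $\phi : t \mapsto t^7$. The Sylow $3$-subgroup of $\F_{49}^\ast$ lies in $\F_7^\ast$, hence is centralised by $\phi$, so $M \cong \left(C_{16} \rtimes \langle \phi \rangle\right) \times C_3$ is the direct product of a $2$-group (semidihedral of order $32$) and a $3$-group; thus $M$ is nilpotent and $F(M) = M$, not $\F_{49}^\ast$. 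This $M$ really is an instance of the lemma: it is irreducible and primitive, since it contains the Singer cycle $\F_{49}^\ast \cong C_{48}$ while the stabiliser in $\GL_2(7)$ of a decomposition into two lines has exponent $12$; and it is a maximal primitive soluble subgroup, since $M \ge Z(\GL_2(7))$ and $M/Z(\GL_2(7)) \cong D_{16}$ is a maximal subgroup of $\mathrm{PGL}_2(7)$ (Dickson, or the ATLAS), so the only proper overgroup of $M$ in $\GL_2(7)$ is $\GL_2(7)$ itself, which is insoluble. For this $M$ the unique maximal abelian normal subgroup is $A = \F_{49}^\ast$, whereas $Z(F) = Z(M) = \F_7^\ast$ and $C_M(A) = A$. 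So $A \ne Z(F)$ and $F \not\le C_M(A)$: your uniqueness argument collapses, and with it your later identity $F = F(C)$, whose justification presupposes $F \le C$. (Any Mersenne prime $q \ge 7$ gives the same phenomenon in $\GL_2(q)$.)

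Two further points. First, the example shows that the difficulty is partly in the statement itself: when $F(M) \not\le C_M(A)$ the expression $\psi(F)$ is undefined, since $\psi$ has domain $C_M(A)$. The lemma must be read, as in Short's book [SHO92] and as it is in fact used in the proof of Theorem~\ref{theo:main}, with $F$ the Fitting subgroup of $C := C_M(A)$ rather than of $M$; with that reading the example above is consistent with every conclusion ($a = 2$, $e = 1$, $l = 0$). But your route cannot be repaired into a proof of that corrected statement either, because it passes through the false identity $A = Z(F(M))$. Second, your proof differs fundamentally from the paper's, which does not re-derive this structure theory at all: it simply cites [SHO92, Theorems~2.5.13 and~2.5.19] and [Wolf82, Corollary~2.4]. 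You attempt to reprove Suprunenko's structure theorem, and beyond the gap above, the central claims of your second paragraph --- that $\psi(C)$ is primitive and not semilinear and that its Fitting subgroup is absolutely irreducible, ``because'' any failure would produce an abelian normal subgroup of $M$ exceeding $A$ --- are precisely the hard content of those citations. As stated, your one-line justification yields subgroups normal in $C$ (at best), and promoting them to subgroups normal in $M$, which is what maximality of $A$ actually requires, is nowhere addressed.
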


\begin{proof}
The uniqueness of $A$ follows from \cite[Theorem~2.5.13]{SHO92}, and its structure from Lemma~\ref{lemma:prelimsemilinear}. The action of $C_M(A)/F$ is from \cite[Corollary 2.4]{Wolf82}, and 
the rest is extracted from~\cite[Theorem~2.5.19]{SHO92}.
\end{proof}
 
\begin{proof}[Proof of Theorem~\ref{theo:main} for primitive groups]
Suppose that $H$ is primitive. We first prove the upper bound on $I(H)$. There exists at least one maximal primitive soluble group $M$ such that $H \le M \le \GL_d(q)$. Let $A$ be a maximal abelian normal subgroup of $M$ and $C = C_M(A)$. 
Then by Lemma~\ref{lemma:prelimdecomposition} the group $A$ 
induces an embedding $\psi$ of $C$ into $\GL_{e}(q^a)$ for some $a$ and $e$ with $d = ae$ such that $\psi(F)$ is  absolutely irreducible.  Write $e = r_1^{m_1} \cdots r_{l}^{m_l}$. Then
$\psi(F)$ is  a tensor product of~$\psi(A)$ and absolutely irreducible extraspecial groups $E_{r_i}$  of dimension $r_i^{m_i}$, and each $E_{r_i}/Z(E_{r_i})$ is a completely reducible $C$-module. Since $F \unlhd C$, either $e =1$ and $F \cong \F_{q^d}^\ast$ or 
Proposition~\ref{prop:tensorproductcase} applies, so in both cases $I(C) \leq 1 + 6.49\log e$. 
By Lemma~\ref{lemma:prelimsemilinear} the group $M/C$ is isomorphic to a subgroup of $\operatorname{Gal}(\F_{q^a}:\F_q)$, so $\ell(M/C) \leq  \Omega(a)$ and
\[
I(H) \leq I(M) \leq 
I(C) + \Omega(a) \leq  1+ 6.49 \log e + \log a  \leq 1 + 6.49(\log e  + \log a ) = 1 + 6.49\log d.\]

For the lower bound, 
suppose that $H$ is isomorphic to $\GamL_1(q^d)$, fixing an  identification of $\F_{q^d}$ with $\F_q^d$.
Let $t = \Omega(d)$ and let $f_1, \dots, f_t$
be the prime divisors of $d$, with multiplicities. 
Let $\phi$ denote the automorphism $\xi \mapsto \xi^q$ of $\F_{q^d}$. Let $v_0 = 1$, for $i \in \{1, \dots, t\}$ choose 
$v_i \in \F_{q^{f_1\cdots f_{i}}} \setminus \F_{q^{f_1\cdots f_{i-1}}}$, 
and let $B = (v_0, \dots, v_t)$. For all $i \in \{0, \dots, t\}$, the group $H_{(v_0, \ldots, v_i)} \cong \langle \phi^{f_1 \cdots f_i} \rangle$. In particular, this yields $H >  H_{(v_0)} > \dots > H_{(v_0, \dots, v_{t})} = 1$. Thus $B$ is an irredundant base of size $t+1 = \Omega(d) +1$ for $H$.

If $H$ is not semilinear and $F(H)$ is absolutely irreducible, then the structure of $H$ is given by Lemma~\ref{lem:prim_not_semilin}, so Proposition~\ref{prop:tensorproductcase} shows that $I(H) \geq \Omega(d)$. 
\end{proof}

In fact, the assumption that $H$ is not semilinear and $F(H)$ is absolutely irreducible could be weakened by a more careful analysis, but we omit the details. 
Instead we move on to the remaining irreducible soluble groups: those that are imprimitive. 
It is clear that $I(H) \leq d$, so the next lemma completes the proof of Theorem~\ref{theo:main}.

\begin{lemma}\label{lemma:imprimitive}
Suppose that $H \leq \GL_d(q)$ is a maximal soluble imprimitive group, 
preserving a decomposition $\F_q^d = V_1 \oplus \dots \oplus V_k$, and let $m = d/k$. 
Write $H =  L \wr T$ with $L \leq \GL_m(q)$ and $T \leq \Sn_k$. Then $I(H) \geq k I(L)$. 
Hence for all $q > 2$ there exists at least one such $H$ for which $I(H)= d$. 
\end{lemma}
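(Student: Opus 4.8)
The plan is to prove the two parts separately: first establish the inequality $I(H) \geq k\,I(L)$ for the wreath product $H = L \wr T$ acting on $V_1 \oplus \dots \oplus V_k$, and then use this to exhibit a group attaining $I(H) = d$.

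For the inequality, I would proceed much as in the proof of Proposition~\ref{prop:tensorproductcase}, but with the direct sum decomposition replacing the tensor decomposition. The base group of $H = L \wr T$ is $B = L_1 \times \dots \times L_k$, where each $L_j \cong L$ acts on the summand $V_j$ and trivially on the others. Since the subgroup relation $I(B) \le I(H)$ holds by Lemma~\ref{lem:boundnormalsubgroup}, it suffices to show $I(B) \geq k\,I(L)$. For each $j$, fix an irredundant base $(u_1^j, \dots, u_{I(L)}^j)$ for the action of $L$ on $V_j$, obtained from a maximal irredundant base for $L$ transported to the $j$-th summand. Because the $L_j$ act on complementary summands and a vector supported in $V_j$ is fixed by every $L_{j'}$ with $j' \neq j$, concatenating these bases across all $j$ yields a sequence of length $k\,I(L)$ that is irredundant for $B$: at each step within block $j$, an element of $L_j$ witnesses the strict descent exactly as it does for $L$ on $V_j$, and it acts trivially on the previously chosen vectors from other blocks. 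Thus $I(H) \geq I(B) \geq k\,I(L)$.

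For the existence statement, I would take $L \le \GL_m(q)$ to be an irreducible soluble group with $I(L) = m$, which exists for every $m$ and every $q > 2$ by the first assertion of Theorem~\ref{theo:main} (the part already proved showing groups with $I(H) = d$ exist when $q > 2$). Choosing $T$ to be any transitive soluble subgroup of $\mathrm{S}_k$ (for instance a cyclic or regular one), the wreath product $H = L \wr T$ is a soluble imprimitive subgroup of $\GL_d(q)$ preserving the decomposition into $k$ blocks of dimension $m$, with $d = km$. Enlarging $H$ to a maximal soluble imprimitive group if necessary only increases $I$, so we may assume $H$ is maximal. The inequality just proved gives $I(H) \geq k\,I(L) = km = d$, and the general bound $I(H) \leq d$ from the linear independence of irredundant bases forces equality $I(H) = d$.

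The main obstacle I anticipate is the verification of irredundancy for the concatenated sequence, specifically ensuring that the descent is strict at \emph{every} step across block boundaries. The subtlety is that an element of $L_j$ realising the strict inequality $B_{(\dots)} > B_{(\dots, v)}$ must not already lie in the stabiliser of $v$; this is guaranteed because each $L_j$ fixes all vectors supported outside $V_j$, so the stabiliser of the full prefix restricts to the stabiliser of the corresponding prefix within block $j$, reducing the claim to the known irredundancy of the chosen base for $L$ on $V_j$. Writing this bookkeeping carefully — identifying the stabiliser of a prefix as a direct product of block-wise stabilisers — is routine but is where the argument must be made precise.
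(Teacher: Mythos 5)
Your proof of the inequality $I(H) \geq k\,I(L)$ is correct and is essentially the paper's own argument: concatenate block-wise copies of a maximal irredundant base for $L$, verify strict descent using the fact that each factor $L_j$ acts trivially outside $V_j$, and pass from the base group $L^k$ to $H$ via Lemma~\ref{lem:boundnormalsubgroup}.

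The existence part, however, contains a circular appeal. You construct an irreducible soluble $L \leq \GL_m(q)$ with $I(L) = m$ by citing ``the first assertion of Theorem~\ref{theo:main} (the part already proved showing groups with $I(H) = d$ exist when $q > 2$)''. That claim is \emph{not} already proved: the paper states explicitly that the present lemma is what completes the proof of Theorem~\ref{theo:main}, and the only portion established beforehand is the primitive case, which produces groups with $I(H) \geq \Omega(d)+1$ --- and indeed primitive groups satisfy $I(H) \leq 6.49 \log d + 1$, so they cannot supply the groups you need once $d$ is large. As written, your construction assumes exactly the statement being proved. The gap is easy to close: either set up a genuine induction on the dimension (with base case $d = 1$ and inductive step given by your wreath product construction with $k \geq 2$), or, as the paper does, dispense with induction entirely by taking the degenerate case $m = 1$. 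Concretely, let $T \leq \Sn_d$ be a transitive cyclic group and $H = \GL_1(q) \wr T \leq \GL_d(q)$; then $H$ is soluble, is irreducible (hence imprimitive with respect to the coordinate decomposition) for all $q > 2$, and $L = \GL_1(q)$ is nontrivial with $I(L) = 1$, so the first part gives $d \geq I(H) \geq d \cdot 1 = d$. Your remaining remarks (monotonicity of $I$ under enlargement and the upper bound $I(H) \leq d$) are fine.
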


\begin{proof}
Let $(v_1, \ldots, v_{I(L)})$ be an irredundant base for the action of $L$ on $\F_q^m$. For $j \in \{1, \dots, k\}$, write $(v_1^j, \dots, v_{I(L)}^j)$ for a corresponding base in $V_j$. Then \[
 (v_1^1, \dots, v_{I(L)}^1, v_1^2, \dots, v_{I(L)}^2, v_1^3, \dots, v_{I(L)}^k)
\]is an irredundant base for the action of $L^k \leq H$ on $\F_q^d$, so Lemma~\ref{lem:boundnormalsubgroup} gives  $I(H) \ge I(L^k) \geq kI(L)$. 

For the final claim, let $T \leq \Sn_d$ be the cyclic group $C_d$ acting transitively, and consider the group $H = \GL_1(q) \wr T \leq \GL_d(q)$, which is soluble, and is irreducible for all $q>2$. Then $I(H) = d$. 
\end{proof}

\section{Greedy bases}\label{Greedy}

In~\cite{Seress96}, Seress conjectured that the greedy base size of each primitive soluble permutation
group is bounded above by 4. In this section we show that this is correct for primitive groups of odd order, but false in general.  More precisely, we show that if $G$ is a primitive group of 
odd order then $b(G) = \mathcal{G}(G)$, and we construct a primitive soluble group with greedy base size greater than 4.

We begin with a key ingredient in our argument for odd order groups. 

\begin{lemma}\label{subdirect}
Let $p$ be an odd prime and let $l$ and $k$ be positive integers. For $i \in \{1, \ldots, k\}$ let $V_i = \F_p^l$, and let $V = \oplus_{i = 1}^k V_i$. Let $L\leq \Gamma\mathrm{L}_1(p^l) \leq \GL_l(p)$ and $T\leq  \Sn_k$ be groups of odd order with $T$ transitive, and let $H$ be a subgroup of $L\wr T \le \GL(V)$. 
Let $v$ be a point in a largest $H$-orbit. Then there exists a $u_i\in V_i \setminus \{0\}$ for each $i$ such that $H_v\leq K:= L_{u_1} \times \cdots \times L_{u_k}.$
\end{lemma}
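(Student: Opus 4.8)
The plan is to prove that a point $v$ in a largest $H$-orbit satisfies two properties, from which the lemma is immediate: (A) $\pi(H_v)=1$, where $\pi\colon L\wr T\to T$ is the projection onto the top group; and (B) for every coordinate $i$ with $v_i=0$, the $i$th projection $\rho_i(H_v)\le L\le \GamL_1(p^l)$ meets the scalars trivially. Writing $B=L_1\times\cdots\times L_k$ for the base group, (A) says $H_v\le B$, so every $h\in H_v$ has the form $(\ell_1,\dots,\ell_k;1)$ and $hv=v$ forces $\ell_i v_i=v_i$ for all $i$. When $v_i\ne 0$ this gives $\rho_i(H_v)\le L_{v_i}$ and I set $u_i=v_i$; when $v_i=0$, property (B) together with Hilbert's Theorem~90 for the cyclic extension $\F_{p^l}/\F_p$ shows that the semilinear group $\rho_i(H_v)$, having trivial scalar part, is conjugate into $\mathrm{Gal}(\F_{p^l}:\F_p)$ and hence fixes a nonzero vector $u_i\in V_i$. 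Since $H_v$ is contained in the product of its coordinate projections, $H_v\le \prod_i\rho_i(H_v)\le \prod_i L_{u_i}=K$. Both (A) and (B) are in fact necessary: $K$ lies in the base group, and a coordinate projection containing a nontrivial scalar fixes no nonzero vector.

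The two parity hypotheses enter through a single elementary observation that I would record first. Every nonzero $L$-orbit on $V_i\setminus\{0\}$ has odd size, as it divides $|L|$; but $|V_i\setminus\{0\}|=p^l-1$ is even since $p$ is odd. Hence the number of nonzero $L$-orbits on each $V_i$ is even, so $L$ is intransitive on $V_i\setminus\{0\}$ and there are at least two distinct nonzero $L$-orbit types. Likewise every element of $T$ has all cycles of odd length, so a nontrivial $\tau\in T$ has a cycle of length at least $3$. These are precisely the two ingredients that create room to ``desymmetrise'' in the next step.

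I would then prove (A) and (B) by contradiction, using maximality of $|v^H|=|H:H_v|$ to rule out the offending elements. The natural bookkeeping is the two-step count $|v^H|=|v^{B_H}|\cdot[\,\overline{H}:\overline{H}_{[v]}\,]$, where $B_H=H\cap B\trianglelefteq H$, the group $\overline{H}=\pi(H)$ acts on the set of $B_H$-orbits, and $[v]$ denotes the $B_H$-orbit of $v$; here $|v^{B_H}|\le\prod_i|v_i^{\,L}|$ and $\pi(H_v)\le\overline{H}_{[v]}$. If (B) fails at some $i$, I would replace the zero coordinate $v_i$ by a vector in a largest nonzero $L$-orbit, which strictly increases the product $\prod_i|v_i^{\,L}|$ bounding the first factor; if (A) fails, a nontrivial $\tau\in\pi(H_v)$ has a cycle of odd length at least $3$ meeting $\mathrm{supp}(v)$, and I would reassign distinct nonzero $L$-orbit types (available by the parity observation) around that cycle so that no nontrivial element of $\overline{H}$ preserves the resulting $B_H$-orbit, enlarging the second factor. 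In each case the aim is to exhibit $v'$ with $|v'^{H}|>|v^{H}|$.

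The main obstacle is that the two factors above are coupled: $B_H$ need not be the full base group and $\overline{H}$ need not be transitive, so a perturbation enlarging one factor could a priori shrink the other, and increasing the bound $\prod_i|v_i^{\,L}|$ need not increase $|v^{B_H}|$ itself. Controlling this is where the oddness is essential — the guaranteed second nonzero $L$-orbit type gives a value to switch to, and the odd cycle lengths forbid the length-two swaps that could leave the orbit size unchanged — but making the increase \emph{strict} requires care. I expect the cleanest way to manage the coupling is to induct on $k$, first using the transitivity of $T$ to reduce to the case where $\overline{H}$ is transitive on the blocks (splitting $V$ into the $\overline{H}$-orbits of blocks), and then to optimise $|v^{B_H}|$ and the $\overline{H}$-factor simultaneously within a single transitive constituent.
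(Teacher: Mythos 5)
Your reduction of the lemma to properties (A) and (B) is correct (including the Hilbert~90 step for coordinates with $v_i=0$: a subgroup of $\Gamma\mathrm{L}_1(p^l)$ meeting $\F_{p^l}^\ast$ trivially is conjugate into the Galois group and so fixes a nonzero vector). But the heart of the matter --- actually proving (A) and (B) for a point $v$ in a largest orbit --- is only sketched, and the sketch as it stands fails. Your perturbation argument is local: when (A) fails you pick one offending element $\tau\in\pi(H_v)$ and modify $v$ along one cycle of $\tau$. After the modification, however, the stabiliser of the new vector $v'$ bears no controllable relation to $H_v$: killing $\tau$ does not prevent $\pi(H_{v'})$ from containing other nontrivial elements, so you cannot conclude $|v'^H|>|v^H|$; and, as you yourself concede, the two factors in your orbit count are coupled, so enlarging the bound $\prod_i|v_i^L|$ or desymmetrising one cycle need not strictly enlarge the actual orbit. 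What the argument really requires is a single assignment of orbit-types to the $k$ coordinates that defeats \emph{every} nontrivial element of the top group simultaneously --- that is, a regular orbit of an odd-order permutation group on the power set of $\{1,\dots,k\}$. This is a genuine theorem (Gluck, 1983), not something recoverable from your two parity observations (odd orbit sizes, odd cycle lengths), and your proposed induction on $k$ is never carried out; in effect it would have to reprove Gluck's result.

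For comparison, the paper sidesteps the perturbation and counting difficulties entirely. It invokes Gluck's theorem to obtain a partition $Q_1\sqcup Q_2$ of $\{1,\dots,k\}$ whose setwise stabiliser in $T$ is trivial, and pairs it with a partition $\F_p^l\setminus\{0\}=P_1\sqcup P_2$ with $P_1=-P_2$ into unions of $L$-orbits (this is where oddness of $|L|$ and the identity $L_x=L_{-x}$ enter). It then builds an auxiliary vector $u$ with $u_i\in P_j$ for $i\in Q_j$ and $u_i=\pm v_i$ whenever $v_i\neq 0$. Any $h\in H_u$ induces a permutation preserving $Q_1\sqcup Q_2$, hence lies in the base group, giving $H_u\leq H\cap K$; and $K$ fixes $v$ because $L_{v_i}=L_{-v_i}$, giving $H\cap K\leq H_v$. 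Maximality of $|v^H|$ then forces $H_v=H_u\leq K$ in one line --- no estimate of how orbit sizes change under perturbation is ever needed. If you want to salvage your approach, import Gluck's theorem at exactly the point where you try to rule out all of $\pi(H_v)$ at once; without it, the gap in proving (A) is fatal.
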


\begin{proof}
Let $\mO\ne\{0\}$ 
be an $L$-orbit on $\mathbb{F}_p^l$. Since $L$ has odd order, $\mO$ has an odd number of points, whence $\mO \ne-\mO = \{-x : x \in \mO\}$. Moreover, since the linear map $x\mapsto -x$ commutes with $L$, we deduce that $-\mO$ is an $L$-orbit, and moreover, $L_{x}=L_{-x}$ for all $x\in \mO$. Let $P_1$ and $P_2$ be unions of the non-zero $L$-orbits such that $P_1=-P_2$ and $P_1\sqcup P_2=\mathbb{F}_p^l\setminus\{0\}$. There exist $v_i\in V_i$ such that $v=v_1+v_2+\cdots+v_k$.

We now define a certain vector $u$ and describe $H_{u}$. By~\cite[Corollary 1]{Gluck83} there is a partition of $\{1,2,\dots,k\}$ into two parts $Q_1$ and $Q_2$ such that $T_{Q_1}=1$. For each $i \in \{1, \ldots, k\}$, first let $j \in \{1, 2\}$ be such that $i \in Q_j$, then if $v_i = 0$ let $u_i$ be any vector in $V_i \cap P_j$, otherwise let $$u_i=\begin{cases} v_i&\text{if $v_i\in P_j$,}\\ -v_i&\text{otherwise.}\end{cases}$$ Finally, let $u=u_1+\cdots+u_k$. 
Now let $h \in H_{u}$ induce $\sigma \in T$. Then by construction of $u$, the permutation $\sigma$ preserves the partition $Q_1\sqcup Q_2$, whence $\sigma =1$. Therefore, $H_u\leq H \cap K$. 

We conclude by showing that $H_{v}=H_{u}$.
 For each $i \in \{1, \ldots, k\}$, if $v_i=0$, then trivially $v_i^{L_{u_i}}=\{v_i\}$; otherwise $u_i=\pm v_i$, but $L_{v_i}=L_{-v_i}$ so again $v_i^{L_{u_i}}=v_i^{L_{v_i}}=\{v_i\}$. Therefore 
 $K$ fixes $v = v_1 + \ldots + v_k$,  and so  $H \cap 
 K\le H_{v}$. Since $v$ is in a largest $H$-orbit,  $|H_v|\leq |H_u|$, so the result follows. 
\end{proof}

\begin{proof}[Proof of Theorem~\ref{oddgreedy}]
By~\cite[Theorem 1.3]{Seress96} we can bound $b(G)\leq 3$. If $b(G)\leq2$ then $b(G) = \mG(G)$, so suppose that $b(G)=3$. 
The 
point stabiliser $H = G_0$ can be identified with an irreducible subgroup of $\mathrm{GL}(V)$ for some $V = \F_p^d$ with $p$ prime, and 
$b(H)=2$, so it suffices to show that $\mathcal{G}(H)=2$. The proof of~\cite[Theorem 1.3]{Seress96} shows that for some divisor $l$ of $d$ and some odd order $L \leq \GamL_1(p^l) \le \GL_l( p)$ and odd order transitive permutation group $T$ on $k:= d/l$ points we can embed  
$H\leq L\wr T \le \GL(V)$.

Let $v\in V$ be a vector in a maximum  size $H$-orbit. 
By Lemma~\ref{subdirect}, 
$H_v\leq K$. 
Now, for each non-zero $x \in  \F_p^l$ the group $\Gamma \mathrm{L}_1(p^l)_x$ has a regular orbit and hence $K$ 
does as well. Thus $H_v$ has a regular orbit, so $\mG(H)  = 2$. 
\end{proof}

We now conclude the paper by constructing a counterexample to Seress' conjecture.
\begin{proof}[Proof of Theorem~\ref{counterexample}]
Let $\Gamma = \GamL_1(4)$, let $P = \Sn_4 \wr \Sn_3 \le \Sn_{12}$ be imprimitive with  blocks $\mB$, and let $G \le \mathrm{A\Gamma L}_{12}(4) \le \AGL_{24}(2)$ have point stabiliser $H := \Gamma \wr P$. It is clear that $G$ is soluble, and that $H$ acts irreducibly on $\F_2^{24}$ so that $G$ is primitive. 

We analyse the possible first choices of an $H$-orbit representative by the greedy algorithm on $H$. We shall break down the coordinates of each $v \in \F_4^{12}$ into three size four \emph{chunks} corresponding to $\mB$. 
Consider first the vectors $v$ with no chunk with more than one $0$.
 The group $\Gamma$ is transitive on the non-zero elements of $\mathbb{F}_4^1$, so each chunk of $v$ can be mapped by $H$ to $v_1:=(1,1,1,0)$ or $v_2:=(1,1,1,1)$.
 Writing $L$ for the subgroup of $\Gamma$ generated by the natural Frobenius automorphism, we find that $$H_{v} \cong \begin{cases}
    ((L\wr \Sn_3)\times\Gamma)\wr \Sn_3&\text{if $v \in (v_1,v_1,v_1)^H$,}\\
    (((L\wr \Sn_3)\times\Gamma)\wr \Sn_2)\times(L\wr \Sn_4)&\text{if $v\in(v_1,v_1,v_2)^H$,}\\
    (L\wr \Sn_4\wr \Sn_2)\times((L\wr \Sn_3)\times\Gamma)&\text{if $v\in(v_2,v_2,v_1)^H$,}\\   
    L\wr\Sn_4\wr \Sn_3 &\text{if $v  \in  (v_2,v_2,v_2)^H$,}\\    
\end{cases}$$
and in particular  $|H_v|\geq 63700992$. 

Consider instead $w=(v_2,v_1,(0,0,1,1))$. We calculate $$H_{w}\cong(L\wr \Sn_4)\times((L\wr  \Sn_3) \times \Gamma)\times((L\wr \Sn_2)\times(\Gamma \wr \Sn_2)),$$ and so $|H_w|=63700992$. Thus either the greedy algorithm is permitted to choose $w$, or there is some vector with smaller stabiliser in $H$, and this vector necessarily has at least one chunk containing at least two $0$s.  Therefore, there is some vector $u$ in a largest $H$-orbit such that $H_u$ contains a subgroup which acts $K:=\Gamma \wr \Sn_2$ on some subspace $U = \F_4^2$ of a chunk.

Let $u_1 = (\alpha, \beta), u_2 = (\gamma, \delta) \in U$: we shall use the 2-transitivity of $\Gamma$ on the non-zero vectors of $\mathbb{F}_4^1$ to show that $K_{(u_1, u_2)} \neq 1$. First, if at least one coordinate entry, say $\alpha$, is zero, then letting $g_1$ be any non-trivial element of $\Gamma_{\gamma}$ we see that $1 \neq (g_1, 1) \in K_{(u_1, u_2)}$. Hence we may assume that $\alpha\beta\gamma\delta \neq 0$.
Next, if $\alpha = \gamma$, then $1 \neq \Gamma_{\alpha} \times 1 \le K_{(u_1, u_2)}$. The case $\beta = \delta$ is identical. 
Finally,  by $2$-transitivity there exists $g_2 \in \Gamma$ such that $(\alpha^{g_2}, \gamma^{g_2}) = 
(\beta, \delta)$,  and also $g_3 \in \Gamma$ such that $(\beta^{g_3}, \delta^{g_3}) = (\alpha, \gamma)$. Now let $h = (g_2, g_3)(1 \ 2) \in K$. Then $u_1^h = (\alpha^{g_2}, \beta^{g_3})^{(1 \ 2)} = u_1$ and $u_2^h = u_2$, so again $K_{(u_1, u_2)} \neq 1$.
%
%
This shows that $b(K) > 2$, and hence $\mG(H_u) \ge b(H_u) \ge b(K) \geq 3$. Therefore $\mathcal{G}(G) = \mathcal{G}(H) +1\geq 5$, and the result follows.
\end{proof}

\bibliographystyle{plain}
{\small\bibliography{refs}}

\parindent 0pt
(S.~Brenner) Department of Mathematics, TU Darmstadt, Darmstadt, Germany. 
\emph{Present address:} Department of Mathematics and Natural Sciences, University of Kassel, Kassel, Germany. \\
\emph{E-mail:} \texttt{sofia.brenner@uni-kassel.de}
\bigskip

(C.~del~Valle) School of Mathematics and Statistics, The Open University, Milton Keynes, United Kingdom. \\
\emph{E-mail:} \texttt{Coen.del-Valle@open.ac.uk}
\bigskip

(C.~M.~Roney-Dougal) School of Mathematics and Statistics, University of St Andrews, St Andrews, United Kingdom. \\
\emph{E-mail:} \texttt{colva.roney-dougal@st-andrews.ac.uk}


\end{document}